\title[Sharp estimates of the Jacobi heat kernel]
    {Sharp estimates of the Jacobi heat kernel}
\author[A{.} Nowak]{Adam Nowak}
\author[P{.} Sj\"ogren]{Peter Sj\"ogren}
\address{Adam Nowak, \newline
			Institute of Mathematics,
      Polish Academy of Sciences, \newline
      \'Sniadeckich 8,
      00--956 Warszawa, Poland      
      }
\email{anowak@impan.pl}
\address{
\noindent Peter Sj\"ogren \newline
    Mathematical Sciences, 
    University of Gothenburg \newline
    Mathematical Sciences,
    Chalmers University of Technology \newline 
    SE-412 96 G\"oteborg,
    Sweden}
\email{\noindent peters@chalmers.se}
\theoremstyle{plain}
\newtheorem{thm}{Theorem}[section]
\newtheorem{bigthm}{Theorem}
\newtheorem{lem}[thm]{Lemma}
\newtheorem{cor}[thm]{Corollary}
\theoremstyle{definition}
\theoremstyle{remark}
\newtheorem*{rem*}{Remark}
\theoremstyle{plain}
\def\R{\mathbb R}
\def\N{\mathbb N}
\def\P{\mathcal P}
\def\m{\mu} 						
\def\r{\varrho}
\def\ab{\alpha,\beta}
\def\J{\mathcal J} 			
\begin{document}

\begin{abstract}
The heat kernel associated with the setting of the classical Jacobi polynomials is defined by
an oscillatory sum which cannot be computed explicitly, in contrast to the situation for the
two other classical systems of orthogonal polynomials.
We deduce sharp estimates giving the order of magnitude of this kernel, for type parameters $\ab \ge -1/2$.
As an application of the upper bound obtained, we show that the maximal operator
of the multi-dimensional Jacobi heat semigroup satisfies a weak type $(1,1)$ inequality.
We also obtain sharp estimates of the Poisson-Jacobi kernel.
\end{abstract}

\maketitle

\footnotetext{
\emph{\noindent 2010 Mathematics Subject Classification:} primary 42C05; secondary 35K08.\\
\emph{Key words and phrases:} Jacobi polynomial, Jacobi expansion, Jacobi heat kernel, 
	Poisson-Jacobi kernel, Jacobi semigroup, maximal operator.

	The first-named author was supported in part by MNiSW Grant N N201 417839.
}

\section{Introduction} \label{sec:intro}

Let $P_n^{\ab}$, $n=0,1,2,\ldots$, be the classical Jacobi polynomials with type 
parameters $\alpha,\beta>-1$, as defined in Szeg\"o's monograph \cite{Sz}.
The Jacobi heat kernel is given by
\begin{equation} \label{ker_ser}
G_t^{\ab}(x,y) = \sum_{n=0}^{\infty} \exp\Big({-t\, n(n+\alpha+\beta+1)}\Big) 
	\frac{P_n^{\ab}(x) P_n^{\ab}(y)}{h_n^{\ab}}, \qquad x,y \in [-1,1], \quad t>0,
\end{equation}
where $h_n^{\ab} = \int_{-1}^1 [P_n^{\ab}(x)]^2 (1-x)^{\alpha}(1+x)^{\beta}dx$ are proper normalizing
constants. The numbers $n(n+\alpha+\beta+1)$ here are the eigenvalues of the Jacobi differential operator,
and it is well known that this kernel gives the solution of the initial-value problem for
the Jacobi heat equation, as explained below.

\vspace{5pt}
Our main result reads as follows.
\begin{bigthm} \label{thm:main}
Assume that $\alpha,\beta \ge -1/2$. Given any $T>0$, there exist positive constants 
$C,c_1$ and $c_2$, depending only on $\alpha,\beta$ and $T$, such that
\begin{align*}
& \; \frac{1}{C}\big[ t + \theta \varphi \big]^{-\alpha-1/2}\, 
	\big[ t + (\pi-\theta)(\pi-\varphi)\big]^{-\beta-1/2}
	\, \frac{1}{\sqrt{t}} \exp\bigg({-c_1 \frac{(\theta-\varphi)^2}t}\bigg) \\
 \le & \; G_t^{\ab}(\cos\theta,\cos\varphi) \\
\le & \; C \big[ t + \theta \varphi \big]^{-\alpha-1/2} \,
	\big[ t + (\pi-\theta)(\pi-\varphi)\big]^{-\beta-1/2}
	\, \frac{1}{\sqrt{t}} \exp\bigg({- c_2\frac{(\theta-\varphi)^2}{t}}\bigg),
\end{align*}
for $\theta,\varphi \in [0,\pi]$ and $0<t\le T$. Moreover,
$$
C^{-1} \le G_t^{\ab}(x,y) \le C,
$$
for $x,y \in [-1,1]$ and $t \ge T$, and $G_t^{\ab}(x,y) \to 1/h_0^{\ab}$ as $t\to \infty$,
uniformly in $x,y \in [-1,1]$.
\end{bigthm}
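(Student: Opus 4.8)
The plan is to treat the large-time and small-time regimes separately, the former being elementary and the latter carrying all the difficulty. For $t \ge T$ I would isolate the $n=0$ term, which equals the constant $1/h_0^{\ab}$ since the eigenvalue $\lambda_0 = 0\cdot(\alpha+\beta+1)=0$ and $P_0^{\ab}\equiv 1$. The remaining tail $\sum_{n\ge 1} e^{-t\lambda_n} P_n^{\ab}(x)P_n^{\ab}(y)/h_n^{\ab}$ is controlled by combining the classical polynomial bound $\sup_{[-1,1]}|P_n^{\ab}| \lesssim n^{\max(\alpha,\beta,-1/2)}$ with the asymptotics $h_n^{\ab}\asymp n^{-1}$ and the super-exponential factor $e^{-t\lambda_n} \le e^{-T\lambda_n}$, where $\lambda_n = n(n+\alpha+\beta+1)\asymp n^2$. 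This yields a convergent majorant $\sum_{n\ge 1} e^{-T\lambda_n} n^{M} < \infty$, giving both $G_t^{\ab}\le C$ and, since the tail tends to $0$ as $t\to\infty$ uniformly in $x,y$, the convergence $G_t^{\ab}\to 1/h_0^{\ab}$. The lower bound $G_t^{\ab}\ge C^{-1}$ then follows by choosing a threshold $T_0$ so large that the tail is $\le 1/(2h_0^{\ab})$ for $t\ge T_0$, and covering the compact range $T\le t\le T_0$ by continuity together with the strict positivity of the heat kernel, which is guaranteed by the fact that the Jacobi semigroup is Markovian, with $\int G_t^{\ab}(x,y)\,d\m(y)=1$.

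For $0<t\le T$ the target estimate itself points to the right comparison. By the symmetry $P_n^{\ab}(-x)=(-1)^n P_n^{\beta,\alpha}(x)$ and $h_n^{\ab}=h_n^{\beta,\alpha}$, which under $\theta\mapsto\pi-\theta$ interchanges the two endpoints and the roles of $\alpha,\beta$, I may treat the two boundary factors on an equal footing. The factor $[t+\theta\varphi]^{-\alpha-1/2}\,t^{-1/2}\exp(-c(\theta-\varphi)^2/t)$ is exactly the sharp two-sided estimate of the heat kernel of the one-dimensional Bessel (modified Hankel) operator of order $\alpha$ near the origin, and the $\beta$-factor is its mirror image at $\pi$; the restriction $\alpha,\beta\ge -1/2$ is precisely what makes these Bessel models well behaved. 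I would therefore show that near $\theta,\varphi\to 0$ the Jacobi kernel is governed by the Bessel heat kernel of order $\alpha$, near $\theta,\varphi\to\pi$ by that of order $\beta$, and in the interior by the free heat kernel on an interval, for which both boundary factors are $\asymp 1$. The rigorous link is the asymptotics of Jacobi polynomials: the Mehler--Heine/Hilb formula expresses $P_n^{\ab}(\cos\theta)$ near $0$ through Bessel functions $J_\alpha(n\theta)$ with controlled remainder, an analogous formula at $\pi$ uses $J_\beta$, and in the bulk the Darboux asymptotics reduce the summand to a cosine. Substituting these expansions into \eqref{ker_ser} converts $G_t^{\ab}$ into model sums whose sharp estimates are either known or obtainable by Poisson summation.

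The technical core is a decomposition of $[0,\pi]^2$ according to (i) where $\theta$ and $\varphi$ sit relative to the endpoints $0$ and $\pi$, and (ii) whether $|\theta-\varphi|\lesssim\sqrt t$ or $|\theta-\varphi|\gtrsim\sqrt t$. In each cell I would establish matching upper and lower bounds. The upper bounds come from inserting the asymptotic expansions, estimating the resulting sum in modulus, and summing against $e^{-t\lambda_n}$: the prefactors from the Bessel asymptotics reproduce the $[t+\theta\varphi]^{-\alpha-1/2}$ and $[t+(\pi-\theta)(\pi-\varphi)]^{-\beta-1/2}$ factors, while the spectral gap $\lambda_n\asymp n^2$ produces the Gaussian via the standard theta-transformation estimate $\sum_n e^{-tn^2}\cos(ns)\lesssim t^{-1/2}\exp(-s^2/(4t))$, the differing numerical constants being absorbed into $c_2$.

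The main obstacle is the lower bound in the regime $|\theta-\varphi|\lesssim\sqrt t$ with $\theta,\varphi$ arbitrary, where the oscillatory cancellation in the series must be controlled from below, not merely bounded in modulus. Here I would extract an explicit positive main term (the leading Bessel or Gaussian contribution) and show the remainder is of strictly smaller order, using the positivity of $G_t^{\ab}$ to preclude sign changes and to absorb errors. The Gaussian lower factor $\exp(-c_1(\theta-\varphi)^2/t)$ in the complementary regime $|\theta-\varphi|\gtrsim\sqrt t$ is the genuinely delicate point: I expect to obtain it from a finite-propagation-speed or semigroup-perturbation argument comparing $G_t^{\ab}$ with the explicit Bessel model, rather than directly from the oscillatory series, and the most laborious bookkeeping will be matching the two decompositions across the transition $|\theta-\varphi|\asymp\sqrt t$.
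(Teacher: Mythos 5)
Your large-time argument (isolating the $n=0$ term, bounding the tail by $\sum_{n\ge1}e^{-Tn(n+\alpha+\beta+1)}n^{M}<\infty$, and covering $T\le t\le T_0$ by compactness, continuity and strict positivity) is essentially sound and even more self-contained than the paper's, which instead derives the large-time bounds from the short-time estimate via the semigroup property; one small repair: Markovianity, i.e.\ $\int G_t^{\alpha,\beta}(x,y)\,d\r_{\alpha,\beta}(y)=1$, does \emph{not} imply pointwise strict positivity of the kernel — for that you must invoke the Karlin--McGregor result the paper cites. The short-time estimate, however, is where the proposal has genuine gaps, at both of the places you yourself flag. First, the upper bound: inserting Mehler--Heine/Hilb asymptotics near the endpoints and Darboux asymptotics in the bulk, and then estimating the remainders \emph{in modulus}, cannot produce the factor $\exp(-c_2(\theta-\varphi)^2/t)$. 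The Hilb remainder is only $O(n^{-3/2})$ (times endpoint weights), so after summing against $e^{-tn(n+\alpha+\beta+1)}$ in absolute value the error is at best a fixed power of $t$; once $(\theta-\varphi)^2\ge Kt\log(1/t)$ this error term dwarfs the claimed bound, so the off-diagonal Gaussian upper estimate does not follow — you would need to exhibit cancellation inside the error sums themselves, which is exactly the ``direct analytic treatment of the heavily oscillating series'' that the paper declares practically impossible. Your theta-transform bound $\sum_n e^{-tn^2}\cos(ns)\lesssim t^{-1/2}e^{-s^2/(4t)}$ handles only the leading cosine term, not the remainders. Second, the off-diagonal lower bound: heat semigroups have no finite propagation speed (that is a property of the wave equation), and the alternative ``semigroup-perturbation argument'' is left unspecified at precisely the crux. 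A chaining argument via the semigroup identity could in principle upgrade near-diagonal lower bounds to Gaussian ones, but that presupposes uniform near-diagonal lower bounds in all regimes, including $\theta\varphi\lesssim t$, and your plan for those (``extract a positive main term, absorb errors'') again runs into the remainder problem above, since near the endpoints the main Bessel term and the Hilb error are not separated in size without further work. So the two-sided short-time Gaussian estimate — the entire content of the theorem — is not established by the outline.

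For contrast, the paper sidesteps the oscillatory series almost entirely, using it only for the rough bound $G_t^{\alpha,\beta}\lesssim t^{-C_0}$ (Theorem \ref{thm:abs}). Its scheme is: (1) transfer the known two-sided Gaussian bounds for the heat kernel on $S^N$ to the ultraspherical kernel $G_t^{\lambda}(x,1)$, $\lambda=N/2-1$, via the zonal identity of Theorem \ref{thm:con}; (2) propagate to all dyadic $\lambda\ge-1/2$, and then to all $\alpha,\beta\ge-1/2$ with $\alpha+\beta$ dyadic, by iterating the reduction formula of Theorem \ref{thm:prod} coming from the Dijksma--Koornwinder product formula, with Lemma \ref{lem:bes} generating the factors $(t+\sin\frac{\theta}2\sin\frac{\varphi}2)^{-\alpha-1/2}$ and $(t+\cos\frac{\theta}2\cos\frac{\varphi}2)^{-\beta-1/2}$; (3) reach general parameters by the parabolic comparison principle of Theorem \ref{thm:comp}, and remove the resulting loss near antipodal pairs $(\theta,\varphi)$ by the semigroup property combined with the rough bound. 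Notice that in this approach the delicate off-diagonal lower bound is inherited automatically from the sphere, where it is classical — which is exactly the ingredient your blind route lacks. If you wish to salvage a direct-series argument, you would at minimum need a mechanism (contour shifting, summation by parts exploiting oscillation of the error terms, or a transference to an exactly solvable model kernel such as the Bessel one with two-sided control of the difference) for both the upper and lower Gaussian factors; as written, the proposal is an honest research plan with the two hardest steps unproven.
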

Thus we obtain a qualitatively sharp description of the behavior of $G_t^{\ab}(x,y)$.
The restriction on $\alpha$ and $\beta$ is imposed by the methods used.
Nevertheless, it is natural to conjecture that the same bounds hold for all $\alpha,\beta > -1$.

The multi-dimensional Jacobi heat kernel is a tensor product of one-dimensional kernels, and
Theorem \ref{thm:main} provides similar bounds also in the multi-dimensional setting.
As an application, we prove that the maximal operator of the multi-dimensional Jacobi semigroup
satisfies a weak type $(1,1)$ estimate, see Theorem \ref{thm:max}. This complements
analogous results in the Hermite and Laguerre polynomial settings, which were obtained 
in dimension one by Muckenhoupt \cite{Mu}, and in arbitrary finite dimension by the
second author \cite{Sj1} and Dinger \cite{Di}, respectively.
For the Laguerre case, see also the authors' paper \cite{NoSj1}.

The heat kernels associated with the two other families of classical orthogonal polynomials
have been known explicitly for a long time. Already in 1866, Mehler \cite{Me} established the formula
$$
\sum_{n=0}^{\infty} \frac{H_n(x)H_n(y)}{2^n n!} \, r^n = \frac{1}{\sqrt{1-r^2}}
	\exp\bigg( \frac{2xyr - (x^2+y^2)r^2}{1-r^2}\bigg), \qquad |r|<1,
$$
which makes it possible to sum the heat kernel related to the Hermite polynomials $H_n$. 
In the case of the Laguerre polynomials $L_n^{\alpha}$, 
the relevant bilinear generating function is the Hille-Hardy formula
$$
\sum_{n=0}^{\infty} \frac{n!}{\Gamma(n+\alpha+1)} L^{\alpha}_n(x)L^{\alpha}_n(y) \, r^n
	= \frac{1}{1-r} \exp\bigg( -\frac{(x+y)r}{1-r}\bigg) (xyr)^{-\alpha/2}
		I_{\alpha}\bigg( \frac{2\sqrt{xyr}}{1-r}\bigg), 
$$
where $|r|<1$, $\alpha>-1$ and $I_{\alpha}$ is the modified Bessel function of the first kind.
This identity was found in 1926 by Hille \cite{Hi} and independently rediscovered later by Hardy
\cite{Ha}, see \cite{Wa2}. An analogue of these formulas in the
Jacobi setting is Bailey's formula
\begin{align*}
& \sum_{n=0}^{\infty} \frac{P_n^{\ab}(\cos\theta) P_n^{\ab}(\cos\varphi)}{h_n^{\ab}}\, r^n =
	\frac{\Gamma(\alpha+\beta+2)}{2^{\alpha+\beta+1}\Gamma(\alpha+1)\Gamma(\beta+1)}\,
	\frac{1-r}{(1+r)^{\alpha+\beta+2}}\\
& \qquad \times F_4\Bigg( \frac{\alpha+\beta+2}2, \frac{\alpha+\beta+3}2; \alpha+1, \beta+1;
	\bigg( \frac{2\sin\frac{\theta}2 \sin\frac{\varphi}2}{r^{1/2}+r^{-1/2}}\bigg)^2,
	\bigg( \frac{2\cos\frac{\theta}2 \cos\frac{\varphi}2}{r^{1/2}+r^{-1/2}}\bigg)^2 \Bigg), 
\end{align*}
where $|r|<1$, $\alpha,\beta > -1$ and $F_4$ is Appel's hypergeometric function of two variables.
This generating function was first stated in 1935 without proof in Bailey's tract \cite{Ba}.
The proof is a straightforward consequence of Watson's formula for hypergeometric functions \cite{Wa1}
and was published slightly later \cite{Ba2}. 

However, in contrast with the Hermite and Laguerre cases,
Bailey's formula does not enable one to compute the Jacobi heat kernel. This is because the eigenvalues
$n(n+\alpha+\beta+1)$ occurring in the defining series are not linear in $n$.
It is known that in the four simple special cases $\ab = \pm 1/2$, 
the kernel $G_t^{\ab}(x,y)$ can be written by means of non-oscillating series.
The argument is based on the periodized Gauss-Weierstrass kernel and simple initial-value
problems for the classical heat equation in an interval.
No further elementary representation for $G_t^{\pm 1/2, \pm 1/2}(x,y)$ seems to be
possible. This indicates that there is little hope of deriving a closed formula for the Jacobi
heat kernel for general $\alpha$ and $\beta$
similar to those of Mehler, Hille and Hardy, and Bailey. A natural and desirable 
substitute for an exact expression are therefore the estimates in Theorem~\ref{thm:main}.

The Jacobi polynomials cover as special cases several other classical families of orthogonal polynomials,
including Chebyshev, Legendre and Gegenbauer (also called ultraspherical) polynomials. 
Special instances of the Jacobi heat kernel exist at least implicitly
in the literature since the 19th century, and the
question of describing its behavior was an open problem, even though perhaps never
stated explicitly in written form. Additional motivation comes from the fact that
$G_t^{\ab}(x,y)$ is also the transition probability density for the Jacobi diffusion process, 
which has important applications in stochastic modeling in physics, economics and genetics;
see \cite{KM} and references given there.
According to our knowledge, the behavior of $G_t^{\ab}(x,y)$ has not been investigated before, except for
its positivity. Bochner \cite{B} proved that the ultraspherical heat kernel is non-negative.
Strict positivity in the general Jacobi case was shown by Karlin and McGregor \cite{KM}.
Some later results on the positivity can be found in Gasper \cite{Ga} and Bochner \cite{B2}.

We also take the opportunity, see the Appendix, to describe the behavior of the Poisson-Jacobi
kernel, which is essentially the sum occurring in Bailey's formula. However, the representation
in terms of Appel's function $F_4$ does not seem to be very useful for this purpose.
Instead we employ a double integral representation that was derived recently by the authors \cite{NoSj2}
from a product formula due to Dijksma and Koornwinder \cite{DK}.

For short times $t$, a direct analytic treatment of the heavily oscillating series defining
$G_t^{\ab}(x,y)$ is practically impossible. 
Therefore, we develop a method combining several ingredients. These are, among others,
the already mentioned product formula of Dijksma and Koornwinder and a resulting
\emph{reduction formula}, transference of heat kernel estimates from a sphere,
a \emph{comparison principle} relating heat kernels for different type parameters,
the semigroup property and, finally, a rough estimate of the series defining $G_t^{\ab}(x,y)$.

The paper is organized as follows. In Section \ref{sec:prel} we introduce three related
Jacobi settings appearing in the literature, and explain how the associated heat kernels are connected.
Section \ref{sec:prep} contains the auxiliary results that form the main tools of our proof of
Theorem~\ref{thm:main}. Several of them are interesting in their own right.
Section~\ref{sec:proof} is devoted to the proof of Theorem~\ref{thm:main}. Maximal operators
of multi-dimensional Jacobi semigroups are treated in Section~\ref{sec:max}; by means of
Theorem \ref{thm:main}, we prove weak type $(1,1)$ estimates for these operators. 
Finally, in the Appendix we prove sharp estimates for the Poisson-Jacobi kernel.

Throughout the paper we use standard notation.
The letter $C$ will stand for many different positive constants independent of significant quantities. 
When writing estimates, we use the notation $X \lesssim Y$ to indicate that $X \le CY$. 
We write $X \simeq Y$ when simultaneously $X \lesssim Y$ and $Y \lesssim X$.
Tracing the proof of Theorem \ref{thm:main}, it is easy to verify that the constants in the
statement come out as claimed.

\vspace{5pt}

\section{Preliminaries} \label{sec:prel}

Given $\alpha,\beta > -1$, the one-dimensional Jacobi polynomials of type $\ab$ are defined
for $n \in \N$ and $-1<x<1$ by the Rodrigues formula (cf. \cite[(4.3.1)]{Sz})
$$
P_n^{\ab}(x) = \frac{(-1)^n}{2^n n!} (1-x)^{-\alpha}(1+x)^{-\beta} \frac{d^n}{dx^n}
	\Big[ (1-x)^{\alpha+n}(1+x)^{\beta+n}\Big].
$$
Note that each $P_n^{\ab}$ is a polynomial of degree $n$.

We will consider three closely related settings of orthogonal systems based on Jacobi polynomials.
All of them have deep roots in the existing literature.
Below we briefly introduce each setting, for the sake of simplicity in
dimension one. Multi-dimensional analogues arise in a standard way as tensor products
of one-dimensional systems.
\subsection*{Pure polynomial setting}
In this case the relevant system $\{P_n^{\ab} : n \ge 0\}$ is formed directly by Jacobi polynomials.
This system is an orthogonal basis in $L^2(d\r_{\ab})$, where $\r_{\ab}$ is the beta-type measure
given by
$$
d\r_{\ab}(x) = (1-x)^{\alpha} (1+x)^{\beta}\, dx
$$
in the interval $[-1,1]$. Each $P_n^{\ab}$ is an eigenfunction of the Jacobi differential operator
$$
J^{\ab} = -(1-x^2)\frac{d^2}{dx^2} - \big[ \beta-\alpha - (\alpha+\beta+2)x\big] \frac{d}{dx};
$$
more precisely
$$
J^{\ab} P_n^{\ab} = n(n+\alpha+\beta+1) P_n^{\ab}, \qquad n \ge 0.
$$
The operator $J^{\ab}$ is symmetric and nonnegative in $L^2(d\r_{\ab})$ on the domain
$C_c^2(-1,1)$, and has a natural self-adjoint extension whose spectral resolution is given by the
$P_n^{\ab}$; see \cite{NoSj} for details. The semigroup $T_t^{\ab} = \exp(-t J^{\ab})$
is a symmetric diffusion semigroup in the sense of \cite[Chapter 3]{topics};
in particular, $T_t^{\ab}\boldsymbol{1} = \boldsymbol{1}$.
It is also the transition semigroup for the Jacobi diffusion process, which already
received attention; cf.~\cite{KM} and references there. Some aspects of harmonic analysis in the
multi-dimensional Jacobi pure polynomial setting were investigated by the authors in \cite{NoSj}.

The integral representation of $T_t^{\ab}$, valid for $f\in L^1(d\r_{\ab})$, is
$$
T_t^{\ab}f(x) = \int_{-1}^1 G_t^{\ab}(x,y) f(y) \, d\r_{\ab}(y),
$$
where the Jacobi heat kernel is given by the oscillating series \eqref{ker_ser}.
The normalizing constants $h_n^{\ab} := \|P_n^{\ab}\|^2_{L^2(d\r_{\ab})}$ in \eqref{ker_ser}
are given by (cf. \cite[(4.3.3)]{Sz})
\begin{equation} \label{2.X}
h_n^{\ab} = \frac{2^{\alpha+\beta+1}\Gamma(n+\alpha+1)\Gamma(n+\beta+1)}
	{(2n+\alpha+\beta+1)\Gamma(n+\alpha+\beta+1)\Gamma(n+1)},
\end{equation}
where for $n=0$ and $\alpha+\beta=-1$ the product $(2n+\alpha+\beta+1)\Gamma(n+\alpha+\beta+1)$
must be replaced by $\Gamma(\alpha+\beta+2)$. Notice that $1/h_n^{\ab} \simeq n$, $n \ge 1$.
As already mentioned, $G_t^{\ab}(x,y)$ is strictly positive for $x,y \in [-1,1]$ and $t>0$.
Further, $G_t^{\ab}(x,y)$ is continuous (and even smooth) in $(t,x,y) \in (0,\infty)\times [-1,1]^2$, 
as well as in $(\ab) \in (-1,\infty)^2$. 
This can be verified by analyzing the defining series, using the corresponding continuity properties
of Jacobi polynomials and the bound 
$$
|P_n^{\ab}(x)| \lesssim (n+1)^{C}, \qquad n \ge 0, \quad x \in [-1,1],
$$
justified in Section \ref{sec:prep}. By \cite[Proposition 3.3]{NoSj}, 
$T_t^{\ab}f(x)$ is for any $f \in L^1(d\r_{\ab})$
a $C^{\infty}$ function of $(t,x)\in (0,\infty)\times (-1,1)$ satisfying the heat equation
$$
\big( \partial_t + J^{\ab}\,\big) T_t^{\ab}f(x) = 0, \qquad x \in (-1,1), \quad t>0.
$$
Finally, we note that (see \cite[p.\,347]{NoSj}) for $f \in C[-1,1]$ 
$$
\lim_{t \to 0^+} T_t^{\ab}f(x) = f(x), \qquad x \in [-1,1],
$$ 
and the convergence is uniform in $x$. All these facts will be used in the sequel.
\subsection*{Trigonometric polynomial setting}
This framework emerges if one applies the natural and convenient trigonometric parametrization
$x=\cos\theta$, $\theta \in [0,\pi]$, to the Jacobi polynomials. We consider the normalized trigonometric
polynomials
$$
\P_n^{\ab}(\theta) = 2^{(\alpha+\beta+1)/2} \big( h_n^{\ab}\big)^{-1/2} P_n^{\ab}(\cos\theta).
$$
The system $\{\P_n^{\ab} : n \ge 0\}$ is orthonormal and complete in $L^2(d\m_{\ab})$, where
$$
d\m_{\ab}(\theta) = \Big( \sin\frac{\theta}2 \Big)^{2\alpha+1} 
	\Big( \cos\frac{\theta}2\Big)^{2\beta+1} d\theta
$$
in $[0,\pi]$. Each $\P_n^{\ab}$ is an eigenfunction of the differential operator 
$$
\J^{\ab} = - \frac{d^2}{d\theta^2} - \frac{\alpha-\beta+(\alpha+\beta+1)\cos\theta}{\sin \theta}
	\frac{d}{d\theta} + \Big( \frac{\alpha+\beta+1}{2}\Big)^2;
$$
indeed
$$
\J^{\ab} \P_n^{\ab} = \Big( n + \frac{\alpha+\beta+1}{2}\Big)^2 \P_n^{\ab}, \qquad n \ge 0.
$$

The operator $\J^{\ab}$ has a natural self-adjoint extension whose spectral resolution is given by the
$\P_n^{\ab}$, see \cite{NoSj2} for details. The semigroup $\mathcal{T}_t^{\ab}=\exp(-t\J^{\ab})$
has the integral representation
$$
\mathcal{T}_t^{\ab}f(\theta) 
	= \int_0^{\pi} \mathcal{G}_t^{\ab}(\theta,\varphi)f(\varphi)\, d\m_{\ab}(\varphi),
$$
valid for $f \in L^1(d\m_{\ab})$, with the heat kernel defined by
$$
\mathcal{G}_t^{\ab}(\theta,\varphi) = \sum_{n=0}^{\infty} e^{-t\left(n+\frac{\alpha+\beta+1}{2}\right)^2}
	\P_n^{\ab}(\theta) \P_n^{\ab}(\varphi).
$$
Note that $\J^{\ab}$ is obtained by transforming $J^{\ab}$ according to the change of variable
$x = \cos\theta$ and introducing the zero order term. The latter modification leads to eigenvalues
which are squares, and therefore the oscillating series defining the one-dimensional 
Poisson-Jacobi kernel in this setting,
\begin{equation*}
\mathcal{H}_t^{\ab}(\theta,\varphi) = \sum_{n=0}^{\infty} e^{-t\left|n+\frac{\alpha+\beta+1}{2}\right|}
	\P_n^{\ab}(\theta)\P_n^{\ab}(\varphi),
\end{equation*}
can be represented in a more convenient way; in particular, Bailey's formula applies.
On the other hand, the semigroup $\mathcal{T}_t^{\ab}$ is submarkovian, but not Markovian in general.

Several fundamental harmonic analysis operators related to the Jacobi trigonometric polynomial setting
were studied recently by the authors \cite{NoSj2}. The ultraspherical case was widely investigated
from a slightly different perspective in the seminal paper of Muckenhoupt and Stein \cite{MuS}, which
in 1965 initiated the development in harmonic analysis known as 
\emph{harmonic analysis of orthogonal expansions}.
In both cases, the analysis was based on the one-dimensional Poisson-Jacobi kernel;
see \cite{NoSj2} for further facts and references.
\subsection*{Trigonometric `function' setting}
This context originates naturally in connection with transplantation problems for Jacobi expansions
(see for instance \cite{Mu1,CNS} and references there) and is derived from the previous setting
by modifying the Jacobi trigonometric polynomials so as to make the resulting system orthogonal with
respect to Lebesgue measure $d\theta$ in $[0,\pi]$. More precisely, we consider the functions
$$
\phi_n^{\ab}(\theta) = \Big( \sin\frac{\theta}2 \Big)^{\alpha+1/2} \Big( \cos\frac{\theta}2\Big)^{\beta+1/2}
	\P_n^{\ab}(\theta), \qquad n \ge 0.
$$
Then the system $\{\phi_n^{\ab} : n \ge 0\}$ is an orthonormal basis in $L^2(d\theta)$.
The associated differential operator is, cf. \cite[Section 4.24]{Sz},
$$
\mathbb{J}^{\ab} = -\frac{d^2}{d\theta^2} + \frac{(\alpha-1/2)(\alpha+1/2)}{4\sin^2\frac{\theta}2}
	+ \frac{(\beta-1/2)(\beta+1/2)}{4\cos^2\frac{\theta}2}
$$
and we have
$$
\mathbb{J}^{\ab} \phi_n^{\ab} = \Big( n + \frac{\alpha+\beta+1}{2}\Big)^2 \phi_n^{\ab}, \qquad n \ge 0.
$$

The semigroup $\mathbb{T}_t^{\ab} = \exp(-t\mathbb{J}^{\ab})$, generated by the natural
self-adjoint extension of $\mathbb{J}^{\ab}$, has the integral representation, 
valid for $f \in L^2(d\theta)$,
$$
\mathbb{T}_t^{\ab}f(\theta) = \int_0^{\pi} \mathbb{G}_t^{\ab}(\theta,\varphi)f(\varphi)\, d\varphi,
$$
where
$$
\mathbb{G}_t^{\ab}(\theta,\varphi) = \sum_{n=0}^{\infty} e^{-t\left(n+\frac{\alpha+\beta+1}{2}\right)^2}
	\phi_n^{\ab}(\theta) \phi_n^{\ab}(\varphi).
$$
Note that $\mathbb{T}_t^{\ab}$ is not defined on all $L^p(d\theta)$, $1<p<\infty$,
if $\alpha<-1/2$ or $\beta<-1/2$.

The Poisson-Jacobi kernel in the Jacobi trigonometric `function' setting is defined by
$$
\mathbb{H}_t^{\ab}(\theta,\varphi) = \sum_{n=0}^{\infty} e^{-t\left|n+\frac{\alpha+\beta+1}{2}\right|}
	\phi_n^{\ab}(\theta) \phi_n^{\ab}(\varphi). \vspace{10pt}
$$

Observe that there is a simple analytic connection between the heat kernels in the three Jacobi
frameworks. In fact, we have
\begin{align}
\mathbb{G}_t^{\ab}(\theta,\varphi) & = 
	\Big( \sin\frac{\theta}2 \sin\frac{\varphi}2 \Big)^{\alpha+1/2} 
	\Big( \cos\frac{\theta}2 \cos\frac{\varphi}2 \Big)^{\beta+1/2}
	\mathcal{G}_t^{\ab}(\theta,\varphi) \nonumber \\
& = 2^{\alpha+\beta+1} e^{-t\left(\frac{\alpha+\beta+1}2\right)^2}
	\Big( \sin\frac{\theta}2 \sin\frac{\varphi}2 \Big)^{\alpha+1/2} 
	\Big( \cos\frac{\theta}2 \cos\frac{\varphi}2 \Big)^{\beta+1/2}
	G_t^{\ab}(\cos\theta,\cos\varphi). \label{rel_ker}
\end{align}
Similarly, for the Jacobi-Poisson kernels,
$$
\mathbb{H}_t^{\ab}(\theta,\varphi) = 
	\Big( \sin\frac{\theta}2 \sin\frac{\varphi}2 \Big)^{\alpha+1/2} 
	\Big( \cos\frac{\theta}2 \cos\frac{\varphi}2 \Big)^{\beta+1/2}
	\mathcal{H}_t^{\ab}(\theta,\varphi).
$$
Thus kernel estimates can be translated between the Jacobi settings.

\vspace{5pt}

\section{Preparatory results} \label{sec:prep}

In this section, we prove several results that will be important ingredients of the proof of
Theorem \ref{thm:main}. They are of independent interest, and so some of them are stated
in slightly larger generality than actually needed for our present purposes.

It is convenient to introduce a compact notation for objects related to the ultraspherical setting, i.e.,
when the Jacobi parameters are equal, say $\alpha=\beta=\lambda$. In such cases, the sub- or superscript
$\lambda,\lambda$ will be shortened to $\lambda$; for instance
$$
P_n^{\lambda}:= P_n^{\lambda,\lambda}, \qquad \r_{\lambda} := \r_{\lambda,\lambda}, \qquad
	h_n^{\lambda} := h_n^{\lambda,\lambda}. 
$$
Notice that this convention differs somewhat from the standard notation for the classical ultraspherical
(Gegenbauer) polynomials $C_n^{\lambda}$, cf. \cite[Section 4.7]{Sz}. In fact we have, see
\cite[(4.7.1)]{Sz},
\begin{equation} \label{geg_rel}
C_n^{\lambda}(x) = \frac{\Gamma(\lambda+1/2)}{\Gamma(2\lambda)} \,
	\frac{\Gamma(n+2\lambda)}{\Gamma(n+\lambda+1/2)}\, P_n^{\lambda-1/2}(x), \qquad \lambda > -1/2,
	\quad \lambda \neq 0.
\end{equation}

\subsection*{Reduction formula}
The following product formula for Jacobi polynomials was derived by Dijksma and Koornwinder \cite{DK}:
\begin{align*}
& P_n^{\ab}(1-2s^2)P_n^{\ab}(1-2t^2) = \frac{\Gamma(\alpha+\beta+1)\Gamma(n+\alpha+1)\Gamma(n+\beta+1)}
	{\pi n! \Gamma(n+\alpha+\beta+1)\Gamma(\alpha+1\slash 2)\Gamma(\beta+1\slash 2)} \\
& \quad \times \int_{-1}^1\int_{-1}^1 C_{2n}^{\alpha+\beta+1}\big(ust+v\sqrt{1-s^2}\sqrt{1-t^2}\big)
	(1-u^2)^{\alpha-1\slash 2}(1-v^2)^{\beta-1\slash 2} du dv.
\end{align*}
This formula is valid for $\alpha,\beta > -1/2$. We shall write it in a more suitable form which,
by a limiting argument, will be valid for all $\alpha,\beta \ge -1/2$.

Let $\Pi_{\alpha}$ be the probability measure on the interval $[-1,1]$
defined for $\alpha>-1\slash 2$ by
$$
d\Pi_{\alpha}(u) = \frac{\Gamma(\alpha+1)}{\sqrt{\pi}\Gamma(\alpha+1\slash 2)}
	(1-u^2)^{\alpha-1\slash 2} du.
$$
In the limit case $\alpha=-1\slash 2$, we put
$$
\Pi_{-1\slash 2} = \frac{1}{2}(\delta_{-1}+\delta_{1}),
$$
where $\delta_{\pm 1}$ denotes a point mass at $\pm 1$.
Note that $\Pi_{-1\slash 2}$ is the weak limit of $\Pi_{\alpha}$ as $\alpha \to -1\slash 2$.
We now rewrite the above product formula with $s=\sin\frac{\theta}2$ and $t=\sin\frac{\varphi}2$,
using \eqref{geg_rel}, the relation between $P_n^{\ab}$ and $\P_n^{\ab}$, the fact that
(cf. \cite[(4.1.1)]{Sz})
$$
P_n^{\lambda}(1)=\frac{\Gamma(n+\lambda+1)}{\Gamma(n+1)\Gamma(\lambda+1)}
$$
and the expression \eqref{2.X} for $h_n^{\ab}$. After some computations, one finds that
\begin{align*}
& \P_n^{\ab}(\theta)\P_n^{\ab}(\varphi) \\
& \quad = \frac{\sqrt{\pi}\Gamma(\alpha+\beta+3/2)}{\Gamma(\alpha+1)\Gamma(\beta+1)} 
	\iint d\Pi_{\alpha}(u)d\Pi_{\beta}(v)\,
	\frac{P_{2n}^{\alpha+\beta+1/2}(u \sin\frac{\theta}2\sin\frac{\varphi}2 + v \cos\frac{\theta}2
		\cos\frac{\varphi}2) P_{2n}^{\alpha+\beta+1/2}(1)}{h_{2n}^{\alpha+\beta+1/2}}.
\end{align*}
This formula holds for all $\alpha,\beta \ge -1/2$, since Jacobi polynomials are continuous
functions of their type parameters (see \cite[(4.21.2)]{Sz}). 

Multiplying both sides above by $\exp(-t(n+\frac{\alpha+\beta+1}2)^2)$ 
and summing over $n \ge 0$ we get
\begin{align*}
\mathcal{G}_t^{\ab}(\theta,\varphi) & = 
\frac{\sqrt{\pi}\Gamma(\alpha+\beta+3/2)}{\Gamma(\alpha+1)\Gamma(\beta+1)}
	\, e^{-t(\alpha+\beta+1)^2/4} \iint d\Pi_{\alpha}(u)d\Pi_{\beta}(v)  \\
		& \qquad \times \sum_{n=0}^{\infty} e^{-t n(n+\alpha+\beta+1)}\,
		\frac{P_{2n}^{\alpha+\beta+1/2}(u \sin\frac{\theta}2\sin\frac{\varphi}2 + v \cos\frac{\theta}2
		\cos\frac{\varphi}2) P_{2n}^{\alpha+\beta+1/2}(1)}{h_{2n}^{\alpha+\beta+1/2}}.
\end{align*}
Writing $t n(n+\alpha+\beta+1)=\frac{t}4 2n [2n + (\alpha+\beta+1/2) + (\alpha+\beta+1/2) +1]$
and taking into account that ultraspherical polynomials of even (odd) orders are even (odd) functions
(cf. \cite[(4.7.4)]{Sz}), we see that the last series represents the even part with respect to the
first variable of the ultraspherical heat kernel with the second variable fixed at the endpoint $1$.
Since for symmetry reasons the corresponding odd part gives no contribution to the integral,
we end up with the following reduction formula.
\begin{thm} \label{thm:prod}
Let $\alpha,\beta \ge -1/2$. Then, for all $\theta,\varphi \in [0,\pi]$ and $t>0$,
$$
G_t^{\ab}(\cos\theta, \cos\varphi)
= \mathcal{C}_{\ab} \,
	\iint 
	G_{t/4}^{\alpha+\beta+1/2}\Big( u \sin\frac{\theta}2\sin\frac{\varphi}2 + v \cos\frac{\theta}2
		\cos\frac{\varphi}2, 1 \Big)\, d\Pi_{\alpha}(u)d\Pi_{\beta}(v),
$$
with the constant $\mathcal{C}_{\ab} 
= \sqrt{\pi}\Gamma(\alpha+\beta+3/2)/(2^{\alpha+\beta+1}\Gamma(\alpha+1)\Gamma(\beta+1))$.
\qed
\end{thm}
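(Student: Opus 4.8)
The plan is to turn the Dijksma--Koornwinder product formula, already recorded above, into a statement about heat kernels by a sequence of substitutions followed by one symmetry observation. First I would put $s=\sin\frac{\theta}{2}$ and $t=\sin\frac{\varphi}{2}$ in that formula (renaming the heat-time to avoid the clash of symbols), so that $1-2s^2=\cos\theta$, $\sqrt{1-s^2}=\cos\frac{\theta}{2}$ and similarly for $\varphi$, whence the argument of the Gegenbauer polynomial becomes exactly $A:=u\sin\frac{\theta}{2}\sin\frac{\varphi}{2}+v\cos\frac{\theta}{2}\cos\frac{\varphi}{2}$; note that $|A|\le\cos\frac{\theta-\varphi}{2}\le1$, so $A$ stays in $[-1,1]$. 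Then I would use \eqref{geg_rel} with $\lambda=\alpha+\beta+1$ to rewrite $C_{2n}^{\alpha+\beta+1}$ in terms of $P_{2n}^{\alpha+\beta+1/2}$, convert the left-hand side into $\P_n^{\ab}(\theta)\P_n^{\ab}(\varphi)$, and insert the value of $P_{2n}^{\alpha+\beta+1/2}(1)$ together with the explicit constants \eqref{2.X} for $h_n^{\ab}$ and $h_{2n}^{\alpha+\beta+1/2}$. Collecting all the Gamma factors is the most laborious part, but it is pure bookkeeping and should produce exactly the intermediate bilinear identity displayed before the theorem, valid first for $\alpha,\beta>-1/2$.

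Next I would build the heat kernel. Multiplying by $\exp(-t(n+\frac{\alpha+\beta+1}{2})^2)$ and summing, I would factor out $\exp(-t(\alpha+\beta+1)^2/4)$ via $(n+\frac{\alpha+\beta+1}{2})^2=n(n+\alpha+\beta+1)+(\alpha+\beta+1)^2/4$. The surviving series is $\sum_{n\ge0}\exp(-tn(n+\alpha+\beta+1))\,P_{2n}^{\lambda}(A)P_{2n}^{\lambda}(1)/h_{2n}^{\lambda}$ with $\lambda=\alpha+\beta+1/2$. The key observation is that the ultraspherical eigenvalue at the even index $2n$ equals $2n(2n+2\lambda+1)=4n(n+\alpha+\beta+1)$, so that, with time $t/4$, this is precisely the partial sum over even indices in the defining series of $G_{t/4}^{\lambda}(A,1)$. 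By the parity $P_m^{\lambda}(-A)=(-1)^mP_m^{\lambda}(A)$ (cf.\ \cite[(4.7.4)]{Sz}), that even-index part equals $\tfrac12[G_{t/4}^{\lambda}(A,1)+G_{t/4}^{\lambda}(-A,1)]$, i.e.\ the even part of $G_{t/4}^{\lambda}(\cdot,1)$ regarded as a function of $A$.

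The decisive simplification is then a symmetry of the measures: both $\Pi_{\alpha}$ and $\Pi_{\beta}$ are invariant under sign change, so $(u,v)\mapsto(-u,-v)$ preserves $d\Pi_{\alpha}(u)d\Pi_{\beta}(v)$ while sending $A$ to $-A$; hence the odd-in-$A$ part integrates to zero and the even part may be replaced by the full kernel $G_{t/4}^{\lambda}(A,1)$ under the double integral. Feeding this back and using \eqref{rel_ker}, which yields $\mathcal{G}_t^{\ab}(\theta,\varphi)=2^{\alpha+\beta+1}\exp(-t(\alpha+\beta+1)^2/4)\,G_t^{\ab}(\cos\theta,\cos\varphi)$, the two factors $\exp(\pm t(\alpha+\beta+1)^2/4)$ cancel, the powers of $2$ collect into the stated constant $\mathcal{C}_{\ab}$, and the reduction formula drops out for $\alpha,\beta>-1/2$.

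Two technical points need attention. To move the summation inside the double integral I would appeal to Fubini together with dominated convergence, justified by the finiteness of the probability measures, the uniform polynomial bound $|P_m^{\lambda}(x)|\lesssim(m+1)^C$ from this section, and the Gaussian decay of $\exp(-tn(n+\alpha+\beta+1))$, which make the series absolutely and uniformly convergent in $(u,v)$. The genuinely delicate step is reaching the boundary values $\alpha=-1/2$ or $\beta=-1/2$, where the density degenerates to $\tfrac12(\delta_{-1}+\delta_1)$; here I would take limits already at the level of the fixed-$n$ bilinear identity, using continuity of the Jacobi polynomials (hence of $\P_n^{\ab}$) in the type parameters and the weak convergence $\Pi_{\alpha}\to\Pi_{-1/2}$ against the continuous, polynomial integrand $P_{2n}^{\lambda}(A)$, and only afterwards resum. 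I expect this limiting passage, rather than any of the algebra, to be the main obstacle, since it is the only place where an analytic (as opposed to purely formal) argument is required.
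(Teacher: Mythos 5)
Your proposal is correct and follows essentially the same route as the paper: the substitution $s=\sin\frac{\theta}2$, $t=\sin\frac{\varphi}2$ in the Dijksma--Koornwinder formula, conversion via \eqref{geg_rel} and \eqref{2.X} to the bilinear identity for $\P_n^{\ab}(\theta)\P_n^{\ab}(\varphi)$, extension to $\alpha,\beta\ge-1/2$ at fixed $n$ by continuity of Jacobi polynomials in the type parameters together with the weak convergence $\Pi_{\alpha}\to\Pi_{-1/2}$, and then resummation with the even-index eigenvalue identification $2n(2n+2\lambda+1)=4n(n+\alpha+\beta+1)$ at time $t/4$ and the parity/symmetry argument killing the odd part under the sign-symmetric measure $d\Pi_{\alpha}\,d\Pi_{\beta}$. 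Your explicit Fubini/dominated-convergence justification for interchanging the sum and the double integral is a harmless refinement of a step the paper leaves implicit.
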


Thus we have expressed the general Jacobi heat kernel in terms of the ultraspherical one. 
The relation between the type parameters here will be essential for our arguments to prove
the heat kernel estimates.

\subsection*{Connection with the spherical heat kernel}
We now consider the Jacobi polynomial setting in the half-integer ultraspherical case, that is when
$$
\alpha=\beta = \frac{N}2-1, \qquad N=1,2,\ldots .
$$
It is well known that this situation is closely connected with expansions
in spherical harmonics on the Euclidean unit sphere of dimension $N$, see for instance
\cite[Section III]{KM}. In particular, there exists a relation between the heat kernels in the two
settings, which we indicate below.

For $N \ge 1$, let $S^N$ be the unit sphere in $\R^{N+1}$ and denote by $\sigma_{N}$ the standard
(non-normalized) area measure on $S^N$. 
The Laplace-Beltrami operator $\Delta_N$ on $S^N$ is symmetric and nonnegative in
$C^{\infty}(S^N)\subset L^2(d\sigma_N)$. The classical system of spherical harmonics on $S^N$
is an orthogonal basis in $L^2(d\sigma_N)$ of eigenfunctions of $\Delta_N$.
The spherical heat semigroup $U_t^N = \exp(t\Delta_N)$,
generated by the self-adjoint extension of $\Delta_N$, has an integral representation
$$
U_t^N f(\xi) = \int_{S^N} K_t^N(\xi,\eta) f(\eta)\, d\sigma_N(\eta), \qquad \xi \in S^N, \quad t>0,
$$
for $L^2(d\sigma_N)$.
The spherical heat kernel $K_t^N(\xi,\eta)$ can be expressed explicitly as an oscillatory series of
spherical harmonics, see \cite[p.\,176]{KM}.
By general theory (cf. \cite[Theorem 5.2.1]{Da}), $K_t^N(\xi,\eta)$ is a strictly positive and continuous
(even smooth) function of $(t,\xi,\eta) \in (0,\infty)\times S^N \times S^N$. 

It is well known that the zonal case in the context of $\Delta_N$ and
expansions with respect to spherical harmonics reduces to the ultraspherical setting 
in the interval $[-1,1]$ with the type parameter $\lambda = N/2-1$. 
Indeed, let $F$ be a zonal function on $S^N$, say $F = f \circ \psi$, where
$$
\psi(\xi) = \xi_1, \qquad \xi \in S^N,
$$
is the zonal projection onto the diameter of $S^N$ determined by the first coordinate axis. Then
the expansion of $F$ in spherical harmonics reduces to the expansion of $f$ in
ultraspherical polynomials $P_n^{\lambda}$ of type $\lambda=N/2-1$.
The associated heat semigroups are related in a similar way, as stated below.
\begin{lem} \label{lem:con}
Assume that $N \ge 1$ and $\lambda = N/2-1$. Then we have for $f \in L^2(d\r_{\lambda})$
$$
\big( T_t^{\lambda}f \big) \circ \psi(\xi) = U_t^N(f\circ \psi)(\xi), \qquad \xi \in S^N, \quad t>0.
$$
\end{lem}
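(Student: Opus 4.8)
The plan is to push the identity down to the eigenfunction bases and then propagate it through the two semigroups. First I would record the measure-theoretic reduction behind the zonal projection $\psi$: slicing $S^N$ by the hyperplanes $\{\xi_1 = x\}$ and integrating out the latitude sphere $S^{N-1}$ of radius $\sqrt{1-x^2}$ shows that the pushforward of $\sigma_N$ under $\psi$ equals $|S^{N-1}|\,(1-x^2)^{\lambda}\,dx$ on $[-1,1]$. Here I use $\lambda=N/2-1$, so that $(N-2)/2=\lambda$ and $(1-x^2)^{\lambda}=(1-x)^{\lambda}(1+x)^{\lambda}$, whence this measure is exactly $|S^{N-1}|\,d\r_{\lambda}$. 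Consequently $f\mapsto f\circ\psi$ is, up to the constant factor $|S^{N-1}|$, an isometry of $L^2(d\r_{\lambda})$ onto the closed subspace of zonal functions in $L^2(d\sigma_N)$; in particular it is bounded.

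The crux is the spectral matching: I would show that for each $n\ge 0$ the function $P_n^{\lambda}\circ\psi$ is a spherical harmonic on $S^N$ of degree $n$, i.e.\ an eigenfunction of $\Delta_N$ with eigenvalue $-n(n+N-1)$. Indeed, for $N\ge 2$ the zonal spherical harmonics of degree $n$ are precisely the scalar multiples of the Gegenbauer polynomial $C_n^{(N-1)/2}(\xi_1)$, and the relation \eqref{geg_rel} applied with parameter $(N-1)/2$ identifies $C_n^{(N-1)/2}$ with a constant multiple of $P_n^{(N-1)/2-1/2}=P_n^{N/2-1}=P_n^{\lambda}$. Hence $P_n^{\lambda}\circ\psi$ is the degree-$n$ zonal harmonic, so $\Delta_N(P_n^{\lambda}\circ\psi)=-n(n+N-1)(P_n^{\lambda}\circ\psi)$. (The remaining case $N=1$, where $\lambda=-1/2$ and the zonal harmonics are the Chebyshev polynomials $T_n(\xi_1)=\cos n\gamma$ with eigenvalue $-n^2$, is handled directly.) This dovetails with the Jacobi side, since $\alpha=\beta=\lambda$ gives $\alpha+\beta+1=N-1$, so that $J^{\lambda}P_n^{\lambda}=n(n+N-1)P_n^{\lambda}$ and the two eigenvalue parametrizations coincide.

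With this in hand the semigroup identity is immediate on the basis: on one side $(T_t^{\lambda}P_n^{\lambda})\circ\psi=e^{-tn(n+N-1)}(P_n^{\lambda}\circ\psi)$, while on the other, using $U_t^N=\exp(t\Delta_N)$ and the eigenvalue just found, $U_t^N(P_n^{\lambda}\circ\psi)=e^{-tn(n+N-1)}(P_n^{\lambda}\circ\psi)$. These agree for every $n$. I would then extend from finite linear combinations of the $P_n^{\lambda}$ to arbitrary $f\in L^2(d\r_{\lambda})$ by density of these polynomials, invoking the boundedness of $f\mapsto f\circ\psi$ together with the $L^2$-contractivity of both $T_t^{\lambda}$ and $U_t^N$; equivalently, both semigroups act on the respective spectral expansions simply by multiplying the $n$-th coefficient by $e^{-tn(n+N-1)}$, so the identity survives the passage to the limit.

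The main obstacle is the spectral matching step, i.e.\ carefully justifying that $P_n^{\lambda}\circ\psi$ is genuinely an eigenfunction of the Laplace--Beltrami operator with the stated eigenvalue. This is classical, but it requires marshalling the Gegenbauer normalization \eqref{geg_rel}, the uniqueness up to a scalar of the zonal harmonic of each degree, and the known eigenvalues $-n(n+N-1)$ of $\Delta_N$ on $S^N$; once these are aligned, the rest is bookkeeping.
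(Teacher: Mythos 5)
Your proof is correct, and its skeleton matches the paper's: reduce by density to $f=P_n^{\lambda}$, show $P_n^{\lambda}\circ\psi$ is an eigenfunction of $\Delta_N$ with eigenvalue $-n(n+N-1)$ matching the Jacobi eigenvalue $n(n+2\lambda+1)$, and conclude on the basis. The one substantive difference lies in how the spectral matching is justified. The paper proves it by a direct computation: writing $\Delta_N$ in hyperspherical coordinates, one verifies the operator identity $\big(J^{\lambda}P_n^{\lambda}\big)\circ\psi=-\Delta_N\big(P_n^{\lambda}\circ\psi\big)$, so that $J^{\lambda}$ is exhibited as the zonal part of $-\Delta_N$ and no knowledge of the spherical spectrum is needed. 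You instead invoke the classical identification of zonal spherical harmonics of degree $n$ with $C_n^{(N-1)/2}(\xi_1)$, pass to $P_n^{\lambda}$ via the Gegenbauer relation \eqref{geg_rel}, and import the known eigenvalues $-n(n+N-1)$; your separate treatment of $N=1$ (where \eqref{geg_rel} degenerates and one falls back on Chebyshev polynomials) is needed and correctly noted. Your route is shorter if one grants the classical harmonic-analysis facts; the paper's is self-contained modulo the coordinate expression for $\Delta_N$ and, as a byproduct, explains \emph{why} the eigenvalues coincide rather than observing that they do. Your explicit computation of the pushforward $\psi_{*}\sigma_N=|S^{N-1}|\,d\r_{\lambda}$ is also a slightly stronger statement than the paper's mere equivalence $f\in L^2(d\r_{\lambda})\Leftrightarrow f\circ\psi\in L^2(d\sigma_N)$, and it serves the same purpose.

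One refinement you should add: your density argument, as written, yields the identity only in $L^2(d\sigma_N)$, i.e.\ almost everywhere, whereas the lemma asserts it for every $\xi\in S^N$. The paper disposes of this at the outset by remarking that for each $t>0$ both $T_t^{\lambda}$ and $U_t^N$ map $L^2$ into continuous functions (their kernels being continuous), so that two continuous functions agreeing a.e.\ agree everywhere. Inserting that one sentence closes the only gap in your argument.
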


\begin{proof}
Observe that the semigroups considered consist of $L^2$-bounded linear operators, which for each $t>0$
map $L^2$ into the subspaces of continuous functions on $[-1,1]$ and $S^N$, respectively. Moreover,
$f \in L^2(d\r_{\lambda})$ if and only if $f\circ \psi \in L^2(d\sigma_N)$. Therefore, since
linear combinations of ultraspherical polynomials are dense in $L^2(d\r_{\lambda})$, 
we may assume that $f = P_k^{\lambda}$ for some $k$. 

The identity we must prove has roots in the fact that the ultraspherical operator $J^{\lambda}$
is essentially the zonal part of the Laplace-Beltrami operator $\Delta_N$ on $S^N$.
Indeed, writing the differential operator $\Delta_N$ in hyperspherical coordinates on $S^N$
(see \cite[p.\,175]{KM}) one easily verifies that 
$$
\big( J^{\lambda} P_k^{\lambda}\big) \circ \psi = -\Delta_N \big( P_k^{\lambda}\circ \psi \big).
$$
Since $P_k^{\lambda}$ is an eigenfunction of $J^{\lambda}$, we see that $P_k^{\lambda}\circ \psi$
is an eigenfunction of $\Delta_N$, with the same eigenvalue. For smooth
functions, the self-adjoint extension of $\Delta_N$ coincides with the differential operator,
and we conclude that
$$
 \big( T_t^{\lambda} P_k^{\lambda} \big) \circ \psi(\xi) = 
	e^{-t k (k+2\lambda+1)} P_k^{\lambda}\circ \psi(\xi)
	= U_t^N \big(P_k^{\lambda} \circ \psi \big)(\xi), \qquad \xi \in S^N,
$$
as desired.
\end{proof}

We now establish a connection between the ultraspherical and spherical heat kernels.
\begin{thm} \label{thm:con}
Assume that $N \ge 1$ and $\lambda = N/2-1$. Then
$$
G_t^{\lambda}(x,y) 
= \int_{S^{N-1}} K_t^{N}\Big(\xi,\big(y,\zeta\sqrt{1-y^2}\big)\Big)\, d\sigma_{N-1}(\zeta),
	\qquad x,y \in [-1,1], \quad t>0,
$$
where $\xi \in \psi^{-1}(\{x\})$ is arbitrary.
\end{thm}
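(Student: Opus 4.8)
The plan is to combine the abstract semigroup identity of Lemma~\ref{lem:con} with the concrete integral representations of the two semigroups, and then read off the kernel identity by testing against arbitrary functions. Fix $N\ge 1$ and $\lambda=N/2-1$. First I would take an arbitrary $f\in L^2(d\r_\lambda)$ and write out both sides of Lemma~\ref{lem:con} as integrals. On the left, using the representation of $T_t^\lambda$,
\[
\big(T_t^\lambda f\big)\circ\psi(\xi)=\int_{-1}^1 G_t^\lambda(\psi(\xi),y)\,f(y)\,d\r_\lambda(y),
\]
which depends on $\xi$ only through $x=\psi(\xi)=\xi_1$. On the right, using the representation of $U_t^N$,
\[
U_t^N(f\circ\psi)(\xi)=\int_{S^N}K_t^N(\xi,\eta)\,f\big(\psi(\eta)\big)\,d\sigma_N(\eta).
\]

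The key step is to disintegrate the surface measure $\sigma_N$ along the level sets of the zonal projection $\psi$. Writing a point $\eta\in S^N\subset\R^{N+1}$ as $\eta=(y,\sqrt{1-y^2}\,\zeta)$ with $y=\eta_1\in[-1,1]$ and $\zeta\in S^{N-1}$, the standard polar-type factorization of the area measure gives
\[
d\sigma_N(\eta)=(1-y^2)^{(N-2)/2}\,dy\,d\sigma_{N-1}(\zeta)=(1-y^2)^{\lambda}\,dy\,d\sigma_{N-1}(\zeta),
\]
since $\lambda=N/2-1$ makes the exponent $(N-2)/2$ equal to $\lambda$; note this matches $d\r_\lambda(y)=(1-y^2)^\lambda dy$ exactly (here $\alpha=\beta=\lambda$). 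Substituting this into the right-hand integral and using that $f(\psi(\eta))=f(y)$ depends only on $y$, I would carry out the inner integration over $\zeta\in S^{N-1}$ to obtain
\[
U_t^N(f\circ\psi)(\xi)=\int_{-1}^1\Big[\int_{S^{N-1}}K_t^N\big(\xi,(y,\sqrt{1-y^2}\,\zeta)\big)\,d\sigma_{N-1}(\zeta)\Big]f(y)\,d\r_\lambda(y).
\]
Comparing this with the left-hand expression and invoking the arbitrariness of $f\in L^2(d\r_\lambda)$ together with the continuity of both kernels forces the two inner factors to agree, which is precisely the claimed identity. The independence of the right-hand side from the choice of $\xi\in\psi^{-1}(\{x\})$ follows from the rotational symmetry of $K_t^N$ under rotations fixing the first coordinate axis, together with the left-hand side depending only on $x$.

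The main obstacle I anticipate is justifying the passage from the equality of the two \emph{integrals} (against all test functions $f$) to the equality of the two \emph{kernels} as functions of $y$. Since the equality holds for all $f\in L^2(d\r_\lambda)$, it gives equality $d\r_\lambda$-a.e.\ in $y$; to upgrade this to the pointwise identity stated for every $x,y\in[-1,1]$ and every $\xi\in\psi^{-1}(\{x\})$, I would rely on the continuity (indeed smoothness) of $G_t^\lambda$ and of $K_t^N$ recorded earlier, which makes both sides continuous in $y$ and hence forces them to coincide everywhere. A secondary technical point is the correct polar factorization of $\sigma_N$ and the verification that the surviving exponent genuinely yields the ultraspherical weight $(1-y^2)^\lambda$, but this is a routine computation once the coordinates $\eta=(y,\sqrt{1-y^2}\,\zeta)$ are fixed.
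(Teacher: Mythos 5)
Your proposal is correct and follows essentially the same route as the paper: test the semigroup identity of Lemma~\ref{lem:con} against suitable functions, disintegrate $\sigma_N$ in zonal coordinates $\eta=(y,\zeta\sqrt{1-y^2})$ with the weight $(1-y^2)^{N/2-1}=(1-y^2)^{\lambda}$, and upgrade the resulting almost-everywhere kernel equality to a pointwise one via continuity. The only (immaterial) difference is that the paper tests against polynomials, dense in $C[-1,1]$, whereas you test against all $f\in L^2(d\r_{\lambda})$; both closings are legitimate given the continuity of $G_t^{\lambda}$ and $K_t^N$.
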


\begin{proof}
Let $f$ be a polynomial on $[-1,1]$. By Lemma \ref{lem:con}
$$
\int_{-1}^1 G_t^{\lambda}\big(\psi(\xi),y\big) f(y) (1-y^2)^{\lambda}\, dy =
\int_{S^N} K_t^{N}(\xi,\eta) f\circ \psi(\eta)\, d\sigma_N(\eta).
$$
To treat the last integral, we introduce zonal coordinates on $S^N$, 
$$
\Psi\colon [-1,1]\times S^{N-1} \longrightarrow S^N, \qquad \Psi(y,\zeta) = \big(y,\zeta\sqrt{1-y^2}\big).
$$
Then for reasonable functions $F$
$$
\int_{S^N} F(\xi)\, d\sigma_{N}(\xi) = \int_{-1}^1 \int_{S^{N-1}} F\circ \Psi(y,\zeta)\, 
	d\sigma_{N-1}(\zeta)\; (1-y^2)^{N/2-1}\, dy.
$$
Therefore,
$$
\int_{-1}^1 G_t^{\lambda}\big(\psi(\xi),y\big) f(y) (1-y^2)^{\lambda}\, dy =
\int_{-1}^1 \int_{S^{N-1}} K_t^{N}\Big(\xi,\big(y,\zeta\sqrt{1-y^2}\big)\Big)\, d\sigma_{N-1}(\zeta) \;
	f(y) (1-y^2)^{\lambda}\, dy.
$$
Since polynomials are dense in $C[-1,1]$ and the kernels in question are continuous functions of their
arguments, it follows that
$$
G_t^{\lambda}\big(\psi(\xi),y\big) 
= \int_{S^{N-1}} K_t^{N}\Big(\xi,\big(y,\zeta\sqrt{1-y^2}\big)\Big)\, d\sigma_{N-1}(\zeta),
$$
for $t>0$, $y \in [-1,1]$ and $\xi \in S^N$.
\end{proof}

The case $y=1$ and $\xi = (x,\sqrt{1-x^2},0,\ldots,0)$ of Theorem \ref{thm:con} reveals a
particularly simple relation between the ultraspherical and spherical heat kernels.
\begin{cor} \label{cor:con}
Let $\lambda = N/2 -1$ for some $N \in \{1,2,\ldots\}$. Then 
$$
G_t^{\lambda}(x,1) = \sigma_{N-1}(S^{N-1}) \; K_t^N\Big( \big(x,\sqrt{1-x^2},0,\ldots,0\big),
	(1,0,\ldots,0)\Big), \qquad x \in [-1,1], \quad t>0.
$$
\end{cor}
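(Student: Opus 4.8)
The plan is to obtain this corollary as an immediate specialization of Theorem~\ref{thm:con}, taking the second spatial variable to be the endpoint $y=1$ and making a convenient choice of the free point $\xi$. First I would recall the identity of Theorem~\ref{thm:con},
$$
G_t^{\lambda}(x,y) = \int_{S^{N-1}} K_t^{N}\Big(\xi,\big(y,\zeta\sqrt{1-y^2}\big)\Big)\, d\sigma_{N-1}(\zeta),
$$
valid for all $t>0$ and $x,y \in [-1,1]$, where $\xi \in S^N$ is any point with $\psi(\xi)=x$, and then set $y=1$. Since $\sqrt{1-y^2}=0$ at $y=1$, the point $\big(y,\zeta\sqrt{1-y^2}\big)$ collapses to the fixed pole $(1,0,\ldots,0)$, independently of the integration variable $\zeta \in S^{N-1}$.

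Consequently the integrand on the right-hand side is constant in $\zeta$, so the integral equals $K_t^N(\xi,(1,0,\ldots,0))$ times the total mass $\sigma_{N-1}(S^{N-1})$ of the area measure on $S^{N-1}$. It then remains only to choose a suitable representative $\xi \in \psi^{-1}(\{x\})$. The natural choice $\xi=\big(x,\sqrt{1-x^2},0,\ldots,0\big)$ lies on $S^N$, because $x^2+(1-x^2)=1$, and satisfies $\psi(\xi)=\xi_1=x$, which is precisely the point at which the left-hand side reads $G_t^{\lambda}(x,1)$. Substituting this $\xi$ yields the asserted formula.

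I do not expect any genuine obstacle here, as the argument is purely a matter of evaluation. The one point deserving a brief remark is the legitimacy of setting $y=1$: Theorem~\ref{thm:con} is already formulated as a pointwise identity holding up to the endpoint, and both $G_t^{\lambda}$ and $K_t^N$ are continuous in their arguments on the relevant domains, so no separate limiting argument is required and the substitution is valid as it stands.
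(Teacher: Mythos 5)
Your proposal is correct and coincides with the paper's own (implicit) proof: the authors obtain Corollary~\ref{cor:con} exactly by taking the case $y=1$ and $\xi=\big(x,\sqrt{1-x^2},0,\ldots,0\big)$ in Theorem~\ref{thm:con}, so that the integrand becomes constant in $\zeta$ and the integral reduces to the total mass $\sigma_{N-1}(S^{N-1})$ times $K_t^N\big(\xi,(1,0,\ldots,0)\big)$. Your remark on the legitimacy of the substitution $y=1$ is also consistent with the paper, since Theorem~\ref{thm:con} is stated for all $y\in[-1,1]$ and both kernels are continuous.
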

This expression for the ultraspherical heat kernel in terms of the spherical one 
will allow us to transfer qualitatively
sharp heat kernel bounds on spheres to the ultraspherical setting with half-integer type index.
On the other hand, it is interesting to observe that the spherical heat kernel on $S^N$ is completely
determined by $G_t^{\lambda}(x,1)$ for $\lambda = N/2-1$. This is a consequence of Corollary \ref{cor:con}
and the fact that $K_t^N(\xi,\eta)$ depends on $\xi$ and $\eta$ only through their spherical distance.

\subsection*{Comparison principle}
Given $\epsilon,\delta \ge 0$, define 
$$
\Phi_{\epsilon,\delta}(x) = (1-x)^{\epsilon\slash 2}(1+x)^{\delta\slash 2}, \qquad x \in [-1,1],
$$
with the convention that $(1\pm x)^0 = 1$ for $x = \mp 1$.
This is the square root of the Radon-Nikodym derivative $d\r_{\alpha+\epsilon,\beta+\delta}/ d\r_{\ab}$.
Using a parabolic PDE technique, we shall prove the following result comparing Jacobi heat kernels
with different type parameters.
\begin{thm} \label{thm:comp}
Let $\alpha,\beta > -1$.
Given $\epsilon, \delta \ge 0$ and $\alpha \ge -\epsilon \slash 2$, $\beta \ge -\delta \slash 2$,
we have
\begin{equation} \label{in_cmp}
	\Phi_{\epsilon,\delta}(x) \Phi_{\epsilon,\delta}(y)\,
		G_t^{\alpha+\epsilon,\beta+\delta}(x,y) 
	\le \exp\bigg({ \frac{\epsilon+\delta}{2} \Big(\alpha+\beta+1+\frac{\epsilon+\delta}{2}\Big)t}\bigg)
		\, G_t^{\ab}(x,y)
\end{equation}
for all $x,y \in [-1,1]$ and $t>0$.
\end{thm}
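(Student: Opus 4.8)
The plan is to read the left-hand side of \eqref{in_cmp} as a heat kernel of a conjugated semigroup and then to compare two semigroups on the same space by a parabolic maximum principle. Write $a=\alpha+\epsilon$ and $b=\beta+\delta$, and let $V$ denote the operator of multiplication by $\Phi_{\epsilon,\delta}$. Since $d\r_{a,b}=\Phi_{\epsilon,\delta}^2\,d\r_{\ab}$, the map $V$ is unitary from $L^2(d\r_{a,b})$ onto $L^2(d\r_{\ab})$, so the operator $\widetilde{J}:=V J^{a,b}V^{-1}=\Phi_{\epsilon,\delta}\,J^{a,b}\,\Phi_{\epsilon,\delta}^{-1}$ is self-adjoint in $L^2(d\r_{\ab})$ and generates the semigroup $V\exp(-tJ^{a,b})V^{-1}$. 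A short computation shows that its integral kernel with respect to $d\r_{\ab}$ equals $\Phi_{\epsilon,\delta}(x)\Phi_{\epsilon,\delta}(y)\,G_t^{a,b}(x,y)$, that is, exactly the quantity on the left of \eqref{in_cmp}. Thus, with $\kappa:=\tfrac{\epsilon+\delta}{2}\big(\alpha+\beta+1+\tfrac{\epsilon+\delta}{2}\big)$, the assertion is equivalent to the kernel domination $\exp(-t\widetilde{J})\le\exp\!\big(-t(J^{\ab}-\kappa)\big)$.

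Next I would compute $\widetilde{J}$ explicitly. Conjugating the second-order operator $J^{a,b}$ by a multiplier leaves the second-order part untouched and only modifies the lower-order terms; a routine check shows that the first-order coefficient produced by the conjugation coincides with that of $J^{\ab}$, so that $\widetilde{J}=J^{\ab}+W$ with a purely multiplicative potential $W$. Carrying out the differentiations with $\Phi_{\epsilon,\delta}(x)=(1-x)^{\epsilon/2}(1+x)^{\delta/2}$ gives
$$
W(x)=\frac{\epsilon(2\alpha+\epsilon)}{2}\,\frac{1}{1-x}+\frac{\delta(2\beta+\delta)}{2}\,\frac{1}{1+x}-\kappa .
$$
Under the hypotheses $\epsilon,\delta\ge0$, $\alpha\ge-\epsilon/2$ and $\beta\ge-\delta/2$, both singular coefficients are nonnegative while $\tfrac1{1-x},\tfrac1{1+x}>0$ on $(-1,1)$; hence $W\ge-\kappa$, equivalently the potential $V_0:=W+\kappa$ is nonnegative. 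This nonnegativity is the structural fact that makes the comparison work, and it is exactly here that the restrictions $\alpha\ge-\epsilon/2$, $\beta\ge-\delta/2$ enter.

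To exploit it, fix $g\in C[-1,1]$ with $g\ge0$ and set $u(t,x)=T_t^{\ab}g(x)$ and $\tilde{u}(t,x)=e^{-\kappa t}\exp(-t\widetilde{J})g(x)$. Both are nonnegative, continuous on $[0,\infty)\times[-1,1]$ with common initial value $g$, and smooth for $t>0$, by the regularity and continuity properties of the kernels recorded in Section~\ref{sec:prel}. Now $u$ solves $(\partial_t+J^{\ab})u=0$, while the factor $e^{-\kappa t}$ is inserted precisely so that $(\partial_t+J^{\ab})\tilde{u}=-V_0\,\tilde{u}\le0$; thus $\tilde{u}$ is a subsolution of the $J^{\ab}$-heat equation with the same data as $u$, and the constant $\kappa$ has been absorbed so that no zeroth-order term of the wrong sign survives. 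The difference $w=u-\tilde{u}$ then satisfies $(\partial_t+J^{\ab})w=V_0\,\tilde{u}\ge0$ with $w(0,\cdot)=0$, so the parabolic minimum principle yields $w\ge0$, that is $\tilde{u}\le u$. Letting $g$ range over nonnegative continuous functions and using continuity of the kernels converts this into $\Phi_{\epsilon,\delta}(x)\Phi_{\epsilon,\delta}(y)\,G_t^{a,b}(x,y)\le e^{\kappa t}G_t^{\ab}(x,y)$, which is \eqref{in_cmp}.

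I expect the main obstacle to be the degeneracy of $J^{\ab}$ at the endpoints $x=\pm1$, where $1-x^2$ vanishes and ellipticity is lost, so the interior minimum principle does not automatically control the parabolic boundary. The saving feature is that the drift in $\partial_t v=(1-x^2)v''+p_{\ab}(x)v'$, with $p_{\ab}(x)=\beta-\alpha-(\alpha+\beta+2)x$, satisfies $p_{\ab}(1)=-2(\alpha+1)<0$ and $p_{\ab}(-1)=2(\beta+1)>0$, hence points into the interval at both endpoints. These are therefore degenerate boundary points of Fichera inflow type at which no boundary condition is needed, so the minimum principle stays valid; making this rigorous --- for instance by running the argument on $[-1+\eta,1-\eta]$ and letting $\eta\to0$, controlling the boundary contributions through the boundedness of $w$ and the inward drift --- is the technical heart of the proof. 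A probabilistic shortcut would be to invoke the Feynman--Kac formula for the Jacobi diffusion, in which $V_0\ge0$ forces the factor $\exp\!\big(-\int_0^t V_0(X_s)\,ds\big)\le1$ and reproduces the bound with the prefactor $e^{\kappa t}$; but the PDE route keeps the argument self-contained.
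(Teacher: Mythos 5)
Your structural reduction is exactly the one underlying the paper's proof: conjugating $J^{\alpha+\epsilon,\beta+\delta}$ by $\Phi_{\epsilon,\delta}$ leaves the second-order part intact, reproduces the first-order coefficient of $J^{\ab}$, and produces the multiplicative potential $W(x)=\epsilon(\alpha+\epsilon/2)/(1-x)+\delta(\beta+\delta/2)/(1+x)-\kappa$; the nonnegativity of $W+\kappa$ under $\alpha\ge-\epsilon/2$, $\beta\ge-\delta/2$ is precisely the square-bracket term that appears in the paper's computation at the touching point, and your $\kappa$ matches the exponential prefactor. Up to that point your computations are correct and coincide with the paper's.

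The genuine gap is the step you yourself flag as ``the technical heart'': you never establish a parabolic minimum principle for the degenerate operator on $[-1,1]$, and neither of your sketched remedies closes it straightforwardly. Running the comparison on $[-1+\eta,1-\eta]$ requires a sign for $w=u-\tilde u$ on the lateral boundary, which is exactly what is unknown; the inward Fichera drift lives only at the endpoints $x=\pm1$, where ellipticity degenerates, and converting it into control of the boundary contributions would require barriers adapted to the degeneracy --- note also that the forcing term $V_0\,\tilde u$ need not stay bounded up to the boundary, since $V_0\simeq (1-x)^{-1}$ while $\tilde u\lesssim \Phi_{\epsilon,\delta}$, giving growth like $(1-x)^{\epsilon/2-1}$ near $x=1$. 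There is a further unjustified claim at $t=0$: for general $g\in C[-1,1]$ with $\epsilon,\delta>0$, the function $g/\Phi_{\epsilon,\delta}$ is unbounded at the endpoints, so continuity of $\tilde u$ on $[0,\infty)\times[-1,1]$ with initial value $g$ is not automatic. The paper resolves all of this by a device missing from your proposal: first reduce to $\epsilon,\delta>0$ using continuity of $G_t^{\ab}$ in the type parameters; then test only against $0\le f\in C_c^\infty(-1,1)$, so that $u=\Phi_{\epsilon,\delta}\,T_t^{\alpha+\epsilon,\beta+\delta}(f/\Phi_{\epsilon,\delta})\le C\,\Phi_{\epsilon,\delta}$ vanishes at $x=\pm1$, and compare with $v=e^{\eta t}\,e^{\kappa t}\,T_t^{\ab}(f+\eta)\ge\eta>0$. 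Then $u<v$ near the endpoints uniformly in time, so a first touching point is forced to be interior and at positive time, where your potential positivity together with the strict term $\eta\,u(T,x_0)>0$ yields the contradiction --- no boundary minimum principle, Fichera analysis, or Feynman--Kac representation (which for the singular potential would itself need justification) is required. With that localization device inserted, your argument becomes the paper's proof.
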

Translating this estimate to the other two Jacobi settings, we get
\begin{cor} \label{cor:comp}
Let $\alpha,\beta,\epsilon,\delta$ be as in Theorem \ref{thm:comp}. Then
$$
\Big( \sin\frac{\theta}2 \sin\frac{\varphi}2 \Big)^{\epsilon}
\Big( \cos\frac{\theta}2 \cos\frac{\varphi}2 \Big)^{\delta} 
\mathcal{G}_t^{\alpha+\epsilon,\beta+\delta}(\theta,\varphi) \le \mathcal{G}_t^{\ab}(\theta,\varphi)
$$
and
$$
\mathbb{G}_t^{\alpha+\epsilon,\beta+\delta}(\theta,\varphi)  \le \mathbb{G}_t^{\ab}(\theta,\varphi),
$$
for all $\theta,\varphi \in [0,\pi]$ and $t>0$, with the natural convention for boundary values in the
second inequality.
\end{cor}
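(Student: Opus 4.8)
The plan is to obtain both inequalities by transcribing the comparison principle of Theorem~\ref{thm:comp} into the trigonometric settings, using the explicit relations \eqref{rel_ker} between the three heat kernels. Recall from \eqref{rel_ker} that $\mathbb{G}$ and $\mathcal{G}$ differ only by the weight $(\sin\frac\theta2\sin\frac\varphi2)^{\alpha+1/2}(\cos\frac\theta2\cos\frac\varphi2)^{\beta+1/2}$, and that
\[
\mathcal{G}_t^{\ab}(\theta,\varphi) = 2^{\alpha+\beta+1}\, e^{-t(\frac{\alpha+\beta+1}{2})^2}\, G_t^{\ab}(\cos\theta,\cos\varphi).
\]
Hence it suffices to prove the first inequality; the second will then follow by multiplying through by the common weight. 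Throughout, the hypotheses on $\alpha,\beta,\epsilon,\delta$ are exactly those of Theorem~\ref{thm:comp}, so that result applies verbatim.

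For the first inequality I would first rewrite $\Phi_{\epsilon,\delta}$ in the angular variable. Since $1-\cos\theta = 2\sin^2\frac\theta2$ and $1+\cos\theta = 2\cos^2\frac\theta2$, we have $\Phi_{\epsilon,\delta}(\cos\theta) = 2^{(\epsilon+\delta)/2}(\sin\frac\theta2)^{\epsilon}(\cos\frac\theta2)^{\delta}$, so that
\[
\Phi_{\epsilon,\delta}(\cos\theta)\Phi_{\epsilon,\delta}(\cos\varphi)
= 2^{\epsilon+\delta}\Big(\sin\tfrac\theta2\sin\tfrac\varphi2\Big)^{\epsilon}\Big(\cos\tfrac\theta2\cos\tfrac\varphi2\Big)^{\delta}.
\]
I then substitute the displayed $\mathcal{G}$--$G$ relation, for both parameter pairs $\ab$ and $(\alpha+\epsilon,\beta+\delta)$, into the claimed inequality and cancel the common factor $2^{\alpha+\beta+1}$. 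What remains is precisely Theorem~\ref{thm:comp} evaluated at $x=\cos\theta$, $y=\cos\varphi$, provided the exponential prefactors match.

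The one point that requires a genuine check — and the crux of the argument — is that these prefactors do match. Writing $a=\alpha+\beta+1$ and $s=\epsilon+\delta$, the exponent carried by the $e^{-t(\cdots)^2}$ factor of the $(\alpha+\epsilon,\beta+\delta)$ kernel, combined with the gain $\frac{s}{2}(a+\frac s2)t$ furnished by Theorem~\ref{thm:comp}, must reproduce the $e^{-t(a/2)^2}$ factor attached to $\mathcal{G}_t^{\ab}$. This is the elementary identity
\[
\frac{s}{2}\Big(a+\frac s2\Big) - \Big(\frac{a+s}{2}\Big)^2 = -\Big(\frac a2\Big)^2,
\]
which holds with \emph{equality}. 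Consequently the exponential factors cancel exactly and the first inequality follows with no residual exponential, giving the stated constant-free bound.

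Finally, the second inequality is immediate from the first. By \eqref{rel_ker},
\[
\mathbb{G}_t^{\alpha+\epsilon,\beta+\delta}(\theta,\varphi)
= \Big(\sin\tfrac\theta2\sin\tfrac\varphi2\Big)^{\alpha+1/2}\Big(\cos\tfrac\theta2\cos\tfrac\varphi2\Big)^{\beta+1/2}
\Big(\sin\tfrac\theta2\sin\tfrac\varphi2\Big)^{\epsilon}\Big(\cos\tfrac\theta2\cos\tfrac\varphi2\Big)^{\delta}
\mathcal{G}_t^{\alpha+\epsilon,\beta+\delta}(\theta,\varphi),
\]
and bounding the product of the last three factors by $\mathcal{G}_t^{\ab}(\theta,\varphi)$ via the first inequality turns the right-hand side into $\mathbb{G}_t^{\ab}(\theta,\varphi)$. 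At the endpoints $\theta,\varphi\in\{0,\pi\}$ the weight factors are interpreted according to the stated convention, consistent with the boundary behavior already encoded in \eqref{rel_ker}. I expect no real difficulty here beyond the exponent bookkeeping; the only substantive step is the exact cancellation recorded above, which is what makes the clean inequalities of the corollary possible.
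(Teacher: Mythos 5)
Your proof is correct and is exactly the translation the paper intends (the paper leaves it implicit with ``Translating this estimate to the other two Jacobi settings''): substitute \eqref{rel_ker} and $\Phi_{\epsilon,\delta}(\cos\theta)=2^{(\epsilon+\delta)/2}(\sin\frac{\theta}2)^{\epsilon}(\cos\frac{\theta}2)^{\delta}$ into Theorem \ref{thm:comp} and verify the exact cancellation of the exponential prefactors. Your identity $\frac{s}{2}(a+\frac{s}{2})-(\frac{a+s}{2})^2=-(\frac{a}{2})^2$ is precisely the cancellation that makes the constant-free inequalities hold, and the deduction of the second inequality from the first via the common weight is likewise the intended step.
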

The relation in the Jacobi trigonometric `function' setting is particularly nice since it shows that
the heat kernel is decreasing as a function of each of the type parameters $\alpha \ge 0$ and $\beta \ge 0$.
Note that, by subordination, the estimates of Corollary \ref{cor:comp} carry over to the corresponding
Poisson kernels.

\begin{proof}[Proof of Theorem \ref{thm:comp}]
Since the Jacobi heat kernel is a continuous function of its type parameters $\ab > -1$, we may
assume that $\epsilon, \delta > 0$. We first rewrite \eqref{in_cmp} in integrated form.
By integrating against $f(y)\, d\r_{\ab}(y)$, we see that \eqref{in_cmp} implies
\begin{equation} \label{p1p}
\Phi_{\epsilon,\delta}(x) T_t^{\alpha+\epsilon,\beta+\delta}( f/ \Phi_{\epsilon,\delta})(x) \le
 e^{ \frac{\epsilon+\delta}{2} (\alpha+\beta+1+\frac{\epsilon+\delta}{2})t}  \,  T_t^{\ab}f(x)
\end{equation}
for suitable functions $f\ge 0$. Conversely, if \eqref{p1p} holds for all nonnegative 
$f \in C^{\infty}_c(-1,1)$, then \eqref{in_cmp} will follow, since $G_t^{\ab}(x,y)$ is continuous
in $(x,y) \in [-1,1]^2$. We shall thus prove the lemma by verifying \eqref{p1p} for $x \in (-1,1)$
and $t>0$ and any $0 \le f \in C^{\infty}_c(-1,1)$ not identically $0$. Our reasoning will rely
on a generalization of the minimum principle method used to prove \cite[Lemma 3.4]{NoSj}.

Denote by $u = u(t,x)$ the left-hand side of \eqref{p1p} and let
$$
v=v(t,x)=e^{t\eta} 
e^{ \frac{\epsilon+\delta}{2} (\alpha+\beta+1+\frac{\epsilon+\delta}{2})t} \, T^{\ab}_t(f+\eta)(x)
$$
for some fixed $\eta >0.$ Since $f$ is smooth, the functions $u$ and $v$ have continuous
extensions to $[0,\infty) \times (-1,1)$. Our task will be done once we show that
$$
u(t,x) \le v(t,x), \qquad x \in (-1,1),
$$
for all $t \ge 0$ and any $\eta > 0$. Let
$$
T = \sup \big\{ t' \ge 0 : u(t,x) \le v(t,x) \; \textrm{for} \; (t,x) \in
[0,t') \times (-1,1) \big\}.
$$
Clearly, $u(0,x) = f(x) < f(x)+\eta = v(0,x)$ for $x \in (-1,1)$.
Moreover, $u(t,x) < v(t,x)$ for all $t\ge 0$ provided that $|x|$ is
sufficiently close to $1$; this is because $u(t,x) < C\Phi_{\epsilon,\delta}(x)$ and $v(t,x)\ge\eta$ for
$t \ge 0$, $x\in (-1,1)$.
Hence for $t$ small enough $u(t,x) < v(t,x)$, $x \in (-1,1)$, which means that $T>0$.

Suppose that $T$ is finite. We shall then derive a contradiction, which will end the reasoning.
Observe that $u(T,x) \le v(T,x)$ for all $x \in (-1,1)$ and $u(T,x_0) = v(T,x_0)$ for some $x_0 \in (-1,1)$.
We claim that
\begin{equation} \label{d_in}
\partial_t \big(v(t,x)-u(t,x)\big)\big|_{(t,x)=(T,x_0)} > 0.
\end{equation}
This would imply that $v(t,x_0)-u(t,x_0) < 0$ for $t$ slightly less than $T$, a contradiction.

To prove this claim, we compute the derivative in \eqref{d_in}. With the aid of the heat
equation we get
\begin{align*}
& \partial_t \big(v(t,x)-u(t,x)\big)  = \\ & \qquad
		 \bigg(\frac{\epsilon+\delta}{2}
	 \Big(\alpha+\beta+1+\frac{\epsilon+\delta}{2}\Big) + \eta \bigg)
    v(t,x) - {J}^{\ab} v(t,x) +
    \Phi_{\epsilon,\delta}(x) {J}^{\alpha+\epsilon,\beta+\delta} 
    	\big( {u(t,x)}/ {\Phi_{\epsilon,\delta}(x)} \big).
\end{align*}
Then using the definition of ${J}^{\ab}$ and the fact that $v-u = \partial_x(v-u)=0$ at 
the point $(T,x_0)$, we find after somewhat lengthy computations 
that the left-hand side in \eqref{d_in} is equal to
$$
(1-x_0^2)\partial_x^2(v-u)(T,x_0) +
     \bigg[ \frac{\epsilon (\alpha+\epsilon\slash 2)}{1-x_0} + 
     \frac{\delta(\beta+\delta\slash 2)}{1+x_0} \bigg] u(T,x_0)
    + \eta \, u(T,x_0).
$$
The first term above is nonnegative, since the function
$x \mapsto v(T,x)-u(T,x)$ has a local minimum at $x=x_0$.
The factor in the square bracket is obviously nonnegative by the assumptions
on $\epsilon,\delta,\alpha,\beta$. Finally, $u(T,x_0)$ is strictly positive by the 
corresponding property of the kernel involved. The claim follows.
\end{proof}

We remark that when either $\alpha < -\epsilon /2$ and $\epsilon > 0$ or $\beta < -\delta/2$ and
$\delta > 0$, the estimate of Theorem \ref{thm:comp} (and thus also the estimates of Corollary
\ref{cor:comp}) does not hold. This can be shown by means of a counterexample very similar to that
of \cite[Remark 3.6]{NoSj}.

\subsection*{Rough estimate}
We now employ absolute value estimates of Jacobi polynomials to obtain a rough short time bound for the
Jacobi heat kernel in terms of $t$ only. This method, of course, distinguishes no subtle effects
coming from oscillations. Therefore, the resulting estimate is far from sharp. 
More accurate upper bounds for the Jacobi heat kernel, involving also dependence on $x$ and $y$,
can be found by means of a more detailed analysis and the estimates for Jacobi polynomials
contained in \cite[Theorem 7.32.2]{Sz}; see also \cite[(7.32.6),(7.32.7)]{Sz}.

\begin{thm} \label{thm:abs}
Let $\alpha,\beta >-1$ and $T>0$ be fixed. Then
$$
G_t^{\ab}(x,y) \lesssim t^{-C_0}, \qquad x,y \in [-1,1], \quad 0< t \le T,
$$
where the constant $C_0$ depends only on $\alpha$ and $\beta$.
\end{thm}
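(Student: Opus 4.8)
The plan is to estimate the defining series \eqref{ker_ser} termwise, discarding all information about the oscillations and keeping only absolute values. Writing $a:=\alpha+\beta+1$ and using that the series converges absolutely and uniformly on compact subsets of $(0,\infty)$ (as already noted in Section \ref{sec:prel}), together with positivity of the kernel, I would start from
$$
G_t^{\ab}(x,y) \le \sum_{n=0}^{\infty} e^{-t\,n(n+a)}\,\frac{|P_n^{\ab}(x)|\,|P_n^{\ab}(y)|}{h_n^{\ab}}.
$$
Into this I insert the two elementary ingredients recorded in Section \ref{sec:prel}: the uniform polynomial bound $|P_n^{\ab}(x)| \lesssim (n+1)^{C}$ for $x\in[-1,1]$, and the growth $1/h_n^{\ab}\simeq n$, i.e. $1/h_n^{\ab}\lesssim n+1$. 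This collapses the estimate to
$$
G_t^{\ab}(x,y) \lesssim \sum_{n=0}^{\infty} e^{-t\,n(n+a)}\,(n+1)^{M}, \qquad M:=2C+1,
$$
with $M$ depending only on $\alpha,\beta$ and with all dependence on $x,y$ now eliminated.

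Second, I would linearize the quadratic exponent. Since $a>-1$, for $n\ge 1$ one has $n+a>0$, and choosing an integer $N_0>2|a|$ gives $n(n+a)\ge n^2/2$ for all $n\ge N_0$. The finitely many terms with $n<N_0$ are each bounded by a constant, hence by $C\,t^{-C_0}$ on the range $0<t\le T$, since there $t^{-C_0}\ge T^{-C_0}$; thus they are harmless. It therefore remains to bound the Gaussian-weighted tail $\sum_{n\ge 0} e^{-t\,n^2/2}(n+1)^{M}$.

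Finally, I would expand $(n+1)^M=\sum_{k=0}^{M}\binom{M}{k}n^{k}$ and bound each piece by the standard integral comparison for a unimodal summand,
$$
\sum_{n=0}^{\infty} n^{k} e^{-t\,n^2/2} \lesssim \int_0^{\infty} s^{k} e^{-t\,s^2/2}\,ds + \sup_{s\ge 0} s^{k} e^{-t\,s^2/2} \lesssim t^{-(k+1)/2},
$$
valid for $0<t\le T$ after absorbing $T$-dependent constants. Summing over $k\le M$ yields the bound $t^{-(M+1)/2}$, so the claim holds with $C_0=(M+1)/2=C+1$, a constant depending only on $\alpha$ and $\beta$. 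There is no genuine obstacle here; the only points requiring a little care are the sign of $a=\alpha+\beta+1$, which may be negative when $\alpha+\beta<0$ and is dealt with by isolating the few small-$n$ terms, and the verification that the integral comparison produces a bound that is uniform as $t\to0^+$ — precisely the regime in which $t^{-C_0}$ blows up and the estimate is of interest.
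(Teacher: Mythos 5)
Your proof is correct and takes essentially the same approach as the paper's: a termwise absolute-value estimate of the defining series using $|P_n^{\ab}(x)|\lesssim (n+1)^{C}$ (Szeg\H{o}'s bound) and $1/h_n^{\ab}\simeq n$, followed by an elementary bound of the resulting numerical series by a negative power of $t$. The only cosmetic difference is that the paper linearizes the exponent via $e^{-tn(n+\alpha+\beta+1)}\le e^{-tn}$ for $n\ge 2$ and sums a geometric-type series, while you retain the Gaussian weight $e^{-tn^2/2}$ and use integral comparison; both yield a non-sharp exponent $C_0$ depending only on $\alpha$ and $\beta$.
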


In the proof we will use the following bound for Jacobi polynomials (see \cite[(7.32.2)]{Sz})
\begin{equation} \label{bg}
|P_n^{\ab}(x)| \lesssim n^{\gamma}, \qquad n \ge 1, \quad x \in [-1,1],
\end{equation}
where $\gamma=\max\{\alpha,\beta,-1/2\}$.

\begin{proof}[Proof of Theorem \ref{thm:abs}]
Recall that $1/h_n^{\ab} \simeq n$ for $n \ge 1$. Thus
$$
G_t^{\ab}(x,y) \lesssim 1 + \sum_{n=2}^{\infty} e^{-tn(n+\alpha+\beta+1)} n |P_n^{\ab}(x)| |P_n^{\ab}(y)|
	\le 1 + \sum_{n=2}^{\infty} e^{-tn} n |P_n^{\ab}(x)| |P_n^{\ab}(y)|,
$$
and \eqref{bg} implies
$$
G_t^{\ab}(x,y) \lesssim 1 + \sum_{n=2}^{\infty} e^{-tn} n^{2\gamma+1} \lesssim 
	1 + \frac{1}{t^{2\gamma+1}} \sum_{n=2}^{\infty} e^{-tn /2} \lesssim t^{-2\gamma-2}.
$$
The theorem is proved.
\end{proof}

\vspace{5pt}

\section{Proof of Theorem \ref{thm:main}} \label{sec:proof}

The large time behavior of $G_t^{\ab}(x,y)$ stated in Theorem \ref{thm:main} 
is a consequence of the short time estimate, in the following way.
Given $T>0$, the short time bound implies that
$$
G_T^{\ab}(z,y) \simeq 1, \qquad z,y \in [-1,1].
$$
By the semigroup property, for $t \ge T$ and $x,y \in [-1,1]$ one has
$$
G_t^{\ab}(x,y) = \int_{-1}^1 G_{t-T}^{\ab}(x,z) G_T^{\ab}(z,y)\, d\r_{\ab}(z).
$$
Since
$$
\int_{-1}^1 G_{t-T}^{\ab}(x,z)\, d\r_{\ab}(z) = T_{t-T}^{\ab}\boldsymbol{1}(x)=1, \qquad x \in [-1,1],
\quad t \ge T,
$$ 
we get the estimates
$$
G_t^{\ab}(x,y) \simeq 1, \qquad x,y \in [-1,1], \quad t \ge T.
$$
The existence of the uniform limit as $t \to \infty$ follows by combining the oscillating series
\eqref{ker_ser} with the estimate \eqref{bg} for Jacobi polynomials.

Thus it remains to prove the short time estimates, and we first introduce some further notation.
A real number $r$ will be called \emph{dyadic} if $r=n/2^k$ for some integers $n,k$.
We will use the notation $X \simeq \simeq Y \exp(-c Z)$ to indicate that 
$Y \exp(-c_1 Z) \lesssim X \lesssim Y \exp(-c_2 Z)$, with positive constants $c_1$ and $c_2$ independent of
significant quantities.
Thus the short time estimates of Theorem \ref{thm:main} can be written
\begin{align} \label{short_est}
& G_t^{\ab}(\cos\theta,\cos\varphi) \\
& \quad  \simeq \simeq \Big( t + \sin\frac{\theta}2\sin\frac{\varphi}2\Big)^{-\alpha-1/2}
	\Big( t + \cos\frac{\theta}2 \cos\frac{\varphi}2\Big)^{-\beta-1/2} \frac{1}{\sqrt{t}}
	\exp\bigg( - c \frac{(\theta-\varphi)^2}{t}\bigg). \nonumber
\end{align}
The quantity $T>0$ will be fixed for the rest of the proof of Theorem \ref{thm:main}.
For the sake of clarity, we divide the proof into several steps, as follows. 
\begin{itemize}
\item[\textbf{1.}] Estimate $G_t^{\lambda}(x,1)$ for half-integer $\lambda \ge -1/2$
	by transferring, via Theorem \ref{thm:con}, known bounds for the spherical heat kernel.
\item[\textbf{2.}] Starting from the bounds of Step 1,
	iterate the reduction formula (Theorem \ref{thm:prod}) to estimate
	$G_t^{\lambda}(x,1)$ for all dyadic values of $\lambda \ge -1/2$.
\item[\textbf{3.}] Apply the reduction formula to the estimate of Step 2
	to prove \eqref{short_est} when $\alpha,\beta \ge -1/2$
	and the sum $\alpha + \beta$ is a dyadic number.
\item[\textbf{4.}] Combine the estimate of Step 3 with the comparison principle 
	(Theorem \ref{thm:comp}) to obtain a weakened version of \eqref{short_est} in the ultraspherical
	case for $\alpha=\beta=\lambda > 0$.
\item[\textbf{5.}] Use the semigroup property and the rough estimate of Theorem \ref{thm:abs}
	to eliminate the weakening in Step 4 and prove \eqref{short_est} for $\alpha=\beta=\lambda > 0$.
\item[\textbf{6.}] Use the estimate of Step 5 and
	iterate with the reduction formula to estimate $G_t^{\lambda}(x,1)$
	for $-1/2 \le \lambda < 0$. In the final stroke 
	apply again the reduction formula to prove \eqref{short_est} for all $\alpha,\beta \ge -1/2$.
\end{itemize}

\subsection*{Step 1} We first invoke the well-known Gaussian bounds for the spherical heat kernel,
see \cite[Theorems 5.5.6 and 5.6.1]{Da}.
Let $d(\xi,\eta)=\arccos \langle \xi, \eta \rangle$ be the spherical distance between 
$\xi$ and $\eta \in S^N$. Given any $\delta > 0$, we have
$$
K_t^N(\xi,\eta) \lesssim \frac{1}{t^{N/2}} \exp\bigg( -\frac{d(\xi,\eta)^2}{4(1+\delta)t}\bigg),
	\qquad \xi,\eta \in S^N, \quad 0 < t \le T,
$$
and
$$
K_t^N(\xi,\eta) \ge \frac{1}{(4\pi t)^{N/2}} \exp\bigg( -\frac{d(\xi,\eta)^2}{4t}\bigg),
	\qquad \xi,\eta \in S^N, \quad t>0.
$$
These estimates, together with Corollary \ref{cor:con} and the observation that 
$d(\xi,\eta)\simeq |\xi-\eta|$ for $\xi,\eta \in S^N$, lead to the following result.
\begin{lem} \label{lem:1}
Assume that $\lambda = N/2 -1$ for some $N \ge 1$. Then
$$
G_t^{\lambda}(x,1) \simeq \simeq \frac{1}{t^{\lambda+1}} \exp\bigg(-c \frac{1-x}{t}\bigg), \qquad
	x \in [-1,1], \quad 0< t \le T,
$$
or equivalently,
\begin{equation} \label{est1}
G_t^{\lambda}(\cos\theta,1) \simeq \simeq \frac{1}{t^{\lambda+1}} \exp\bigg(- c \frac{\theta^2}{t}\bigg),
	\qquad \theta \in [0,\pi], \quad 0 < t \le T.
\end{equation}
\end{lem}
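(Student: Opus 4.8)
The plan is to read the estimate off directly from Corollary~\ref{cor:con} together with the Gaussian bounds for the spherical heat kernel just recalled. By Corollary~\ref{cor:con}, taking $\xi = (x,\sqrt{1-x^2},0,\ldots,0)$ and $\eta=(1,0,\ldots,0)$, one has $G_t^{\lambda}(x,1) = \sigma_{N-1}(S^{N-1})\, K_t^N(\xi,\eta)$, where the factor $\sigma_{N-1}(S^{N-1})$ depends only on $N$, hence only on $\lambda$, and so may be absorbed into the implied constants. Thus everything reduces to estimating $K_t^N(\xi,\eta)$ for this particular pair of points.

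First I would compute the relevant distance. Since $\langle \xi,\eta\rangle = x$, the Euclidean distance satisfies $|\xi-\eta|^2 = (1-x)^2 + (1-x^2) = 2(1-x)$, so the observation $d(\xi,\eta)\simeq |\xi-\eta|$ yields $d(\xi,\eta)^2 \simeq 1-x$. Plugging this into the two one-sided Gaussian bounds, and recalling that $N/2 = \lambda+1$, the upper estimate (with $\delta$ fixed, say $\delta=1$) gives $K_t^N(\xi,\eta) \lesssim t^{-(\lambda+1)} \exp(-c_2 (1-x)/t)$ for $0<t\le T$, while the lower estimate gives $K_t^N(\xi,\eta) \gtrsim t^{-(\lambda+1)} \exp(-c_1(1-x)/t)$ for $t>0$; both ranges cover $0<t\le T$. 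Here the one-sided comparisons $d^2 \lesssim 1-x$ and $d^2 \gtrsim 1-x$ feed into the two exponentials in opposite directions, which is precisely what the $\simeq\simeq$ notation accommodates, since the constants $c_1$ and $c_2$ are allowed to differ. Combining the two bounds yields the first displayed estimate.

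For the equivalent trigonometric form I would substitute $x=\cos\theta$ and use $1-\cos\theta = 2\sin^2\frac{\theta}{2} \simeq \theta^2$ for $\theta \in [0,\pi]$, which after once more adjusting the constants in the exponentials turns the bound into \eqref{est1}.

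I do not anticipate a genuine obstacle: once Corollary~\ref{cor:con} and the spherical Gaussian bounds are in hand, the lemma is essentially a distance computation followed by a change of variable. The only point requiring a little care is the bookkeeping of constants, namely keeping the two Gaussian exponents on the correct sides of the inequalities and verifying that the $\delta$-dependence of the upper constant is harmless once $\delta$ has been fixed.
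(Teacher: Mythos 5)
Your proposal is correct and follows exactly the paper's route: the paper's Step 1 proves Lemma \ref{lem:1} precisely by combining the Gaussian bounds for $K_t^N$ from Davies with Corollary \ref{cor:con} and the equivalence $d(\xi,\eta)\simeq|\xi-\eta|$, which your computation $|\xi-\eta|^2=2(1-x)$ makes explicit. Your handling of the constants (absorbing $\sigma_{N-1}(S^{N-1})$, fixing $\delta$, and letting the two-sided comparison $d(\xi,\eta)^2\simeq 1-x$ feed into the $\simeq\simeq$ notation with different exponents $c_1,c_2$) and the final substitution $1-\cos\theta\simeq\theta^2$ are all exactly what the paper's sketch tacitly requires.
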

Notice that this estimate is a special case of \eqref{short_est}.

\subsection*{Step 2.} We claim that the bounds of Lemma \ref{lem:1} hold for all dyadic values of
$\lambda \ge -1/2$. To verify this, it is enough to prove the following lemma, since one can then iterate. 
\begin{lem} \label{lem:iter}
Assume that the estimate \eqref{est1} holds for some $\lambda > -1/2$.
Then it holds also with $\lambda$ replaced by $\lambda' = \lambda/2-1/4$.
\end{lem}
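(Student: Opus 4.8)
The plan is to specialize the reduction formula of Theorem~\ref{thm:prod} to the ultraspherical case $\alpha=\beta=\lambda'$. Since then $\alpha+\beta+1/2=2\lambda'+1/2=\lambda$, the inner kernel on the right-hand side carries exactly the index $\lambda$ for which \eqref{est1} is assumed to hold. I further place the second spatial variable at the endpoint by taking $\varphi=0$, so that $\cos\varphi=1$, $\sin\frac{\varphi}{2}=0$ and $\cos\frac{\varphi}{2}=1$. The double integral then collapses, because the integrand no longer depends on $u$ and $\Pi_{\lambda'}$ is a probability measure, and I get
$$
G_t^{\lambda'}(\cos\theta,1)=\mathcal{C}_{\lambda'}\int_{-1}^1 G_{t/4}^{\lambda}\Big(v\cos\frac{\theta}{2},1\Big)\,d\Pi_{\lambda'}(v).
$$
Note that $\lambda'=\lambda/2-1/4>-1/2$ whenever $\lambda>-1/2$, so $\Pi_{\lambda'}$ is the absolutely continuous measure with density proportional to $(1-v^2)^{\lambda'-1/2}$ and no point masses intervene.

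Next I substitute the hypothesis, used in its equivalent form $G_s^{\lambda}(x,1)\simeq\simeq s^{-\lambda-1}\exp(-c(1-x)/s)$, with $x=v\cos\frac{\theta}{2}$ and $s=t/4$. Absorbing the fixed constants $\mathcal{C}_{\lambda'}$ and $4^{\lambda+1}$ into the relation $\simeq\simeq$, I reduce the claim to showing that $G_t^{\lambda'}(\cos\theta,1)\simeq\simeq t^{-\lambda-1}M(a,t)$ has the form \eqref{est1} for $\lambda'$, where
$$
M(a,t):=\int_{-1}^1\exp\Big(-c\,\frac{1-va}{t}\Big)(1-v^2)^{\lambda'-1/2}\,dv,\qquad a=\cos\frac{\theta}{2}\in[0,1].
$$
Since $\lambda-\lambda'=\lambda'+1/2$, one has $t^{-\lambda-1}\,t^{\lambda'+1/2}=t^{-\lambda'-1}$, so the correct power of $t$ will appear precisely when $M(a,t)$ supplies the factor $t^{\lambda'+1/2}$ together with the appropriate Gaussian.

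To evaluate $M(a,t)$ I substitute $w=1-v$ and write $1-va=(1-a)+aw$; this factors out $\exp(-c(1-a)/t)$ and leaves $\int_0^2\exp(-caw/t)\,w^{\lambda'-1/2}(2-w)^{\lambda'-1/2}\,dw$. I estimate the remaining integral in two regimes. When $a\lesssim t$ the exponential is comparable to $1$ on all of $[0,2]$, so the integral is comparable to the convergent constant $\int_0^2 w^{\lambda'-1/2}(2-w)^{\lambda'-1/2}\,dw$. When $a\gtrsim t$ the mass concentrates near $w=0$, where $(2-w)^{\lambda'-1/2}$ is comparable to a constant, and the integral behaves like the Gamma-type integral $\int_0^\infty\exp(-caw/t)\,w^{\lambda'-1/2}\,dw\simeq(t/a)^{\lambda'+1/2}$. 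Both regimes are captured uniformly by $\big(t/(t+a)\big)^{\lambda'+1/2}$, whence
$$
M(a,t)\simeq\simeq t^{\lambda'+1/2}\,(t+a)^{-\lambda'-1/2}\exp\Big(-c\,\frac{1-a}{t}\Big).
$$

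It remains to pass back to \eqref{est1} with index $\lambda'$, using $1-\cos\frac{\theta}{2}\simeq\theta^2$ on $[0,\pi]$. The only discrepancy with the target $t^{-\lambda'-1}\exp(-c\theta^2/t)$ is the extra factor $(t+\cos\frac{\theta}{2})^{-\lambda'-1/2}$. For $\theta$ bounded away from $\pi$ this is harmless, since $\cos\frac{\theta}{2}$ is then bounded below and the factor is comparable to a constant. The delicate region, which I expect to be the main obstacle, is $\theta$ near $\pi$: there $\cos\frac{\theta}{2}\to0$ and the factor can be as large as $t^{-\lambda'-1/2}$. The remedy is that $1-\cos\frac{\theta}{2}$, and hence $\theta^2$, is bounded below on that region, so the Gaussian $\exp(-c\theta^2/t)$ dominates any fixed negative power of $t$ at the cost of a slightly worse constant in the exponent. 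This trade-off is exactly what $\simeq\simeq$ permits, through its freedom to use different constants $c_1$ and $c_2$ in the lower and upper Gaussian bounds, and it yields \eqref{est1} for $\lambda'$, uniformly for $\theta\in[0,\pi]$ and $0<t\le T$.
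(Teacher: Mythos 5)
Your proposal is correct and takes essentially the same route as the paper: specializing Theorem~\ref{thm:prod} to $\alpha=\beta=\lambda'$ with $\varphi=0$, inserting the hypothesis \eqref{est1}, and then absorbing the leftover factor $\big(t+\cos\frac{\theta}{2}\big)^{-\lambda'-1/2}$ into the Gaussian (bounded below by a constant since $\lambda'>-1/2$, and above by $\exp(\epsilon\theta^2/t)$ for any $\epsilon>0$). Your direct two-regime evaluation of $M(a,t)$ is exactly the content of the paper's auxiliary Lemma~\ref{lem:bes}, merely inlined rather than quoted.
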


To prove this lemma, we need an auxiliary result.
\begin{lem} \label{lem:bes}
Let $\nu > -1/2$. Then
$$
\int \exp(zs) \, d\Pi_{\nu}(s) \simeq (1+z)^{-\nu-1/2}\, e^z, \qquad z \ge 0.
$$
\end{lem}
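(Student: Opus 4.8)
The plan is to reduce the estimate to a standard Laplace-type integral and analyze it directly. Writing out the measure, the quantity to be estimated is
$$
\int \exp(zs)\, d\Pi_{\nu}(s) = \frac{\Gamma(\nu+1)}{\sqrt{\pi}\,\Gamma(\nu+1/2)} \int_{-1}^1 e^{zs}(1-s^2)^{\nu-1/2}\,ds,
$$
where the normalizing factor is a fixed positive constant and may be ignored. The key first move is the substitution $s = 1-r$, which, together with $1-s^2 = r(2-r)$, turns the integral into
$$
e^{z} \int_0^2 e^{-zr}\,[r(2-r)]^{\nu-1/2}\,dr.
$$
This isolates the factor $e^z$ and exhibits $r=0$ (that is, $s=1$) as the point governing the behavior for large $z$. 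Since $\nu > -1/2$, we have $\nu - 1/2 > -1$, so the integral converges at $r=0$.

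It then suffices to show that the remaining integral is $\simeq (1+z)^{-\nu-1/2}$ for $z \ge 0$, and I would split into two regimes. For $0 \le z \le 1$ the exponential $e^{-zr}$ is bounded above and below by positive constants on $[0,2]$, so the integral is comparable to $\int_0^2 [r(2-r)]^{\nu-1/2}\,dr$, a finite positive constant; since also $(1+z)^{-\nu-1/2} \simeq 1$ here, the claim holds on this range. The substance lies in the regime $z \ge 1$, where the target is $\int_0^2 e^{-zr}[r(2-r)]^{\nu-1/2}\,dr \simeq z^{-\nu-1/2}$.

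For $z \ge 1$ the main point is that on $[0,1]$ the factor $(2-r)^{\nu-1/2}$ lies between two positive constants, regardless of the sign of $\nu-1/2$, so $[r(2-r)]^{\nu-1/2} \simeq r^{\nu-1/2}$ there. The upper bound then follows by estimating the $[0,1]$ part against $\int_0^{\infty} e^{-zr}r^{\nu-1/2}\,dr = \Gamma(\nu+1/2)\,z^{-\nu-1/2}$ and observing that the $[1,2]$ part is at most $Ce^{-z} \lesssim z^{-\nu-1/2}$ for $z \ge 1$. For the lower bound I would restrict the integral to $[0,1/z] \subset [0,1]$, where $e^{-zr} \ge e^{-1}$, giving $\gtrsim \int_0^{1/z} r^{\nu-1/2}\,dr \simeq z^{-\nu-1/2}$, using $\nu + 1/2 > 0$. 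Combining the two regimes yields the asserted two-sided bound. The only delicate bookkeeping, and hence the main obstacle, is keeping the constants uniform across the sign of $\nu - 1/2$ and correctly matching the elementary decay $e^{-z}$ against the polynomial factor $z^{-\nu-1/2}$ on the cutoff piece; none of this is deep, but it must be carried out with care. Alternatively, one could recognize the integral as $\Gamma(\nu+1)(z/2)^{-\nu} I_{\nu}(z)$ and quote the small- and large-argument asymptotics of the modified Bessel function $I_{\nu}$, but the direct argument above is self-contained.
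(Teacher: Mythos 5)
Your proof is correct and follows essentially the same route as the paper's: isolate the factor $e^z$ by the substitution $s=1-r$, dispose of small $z$ trivially, and for $z\ge 1$ reduce to a Gamma-type integral producing $z^{-\nu-1/2}$ (the paper rescales $r=zt$ and notes $\int_0^z e^{-r}r^{\nu-1/2}\,dr\simeq 1$, which is exactly what your upper bound via $\int_0^\infty$ and lower bound via $[0,1/z]$ unpack). The only cosmetic difference is that the paper removes the contribution from $s\in(-1,0)$ by observing it is dominated by the integral over $(0,1)$, whereas you bound the corresponding piece $r\in[1,2]$ by $Ce^{-z}\lesssim z^{-\nu-1/2}$; both are fine.
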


\begin{proof}
One can assume that $z>1$, since the opposite case is trivial. We split the integral and observe
that the integral taken over $(0,1)$ is larger than that over $(-1,0)$. Thus we need only consider
$$
\int_{0}^1 e^{zs}\, d\Pi_{\nu}(s) \simeq \int_0^1 e^{zs} (1-s)^{\nu-1/2}\, ds.
$$
Here we make the two transformations $t=1-s$ and $r=zt$, and get
$$
e^z z^{-\nu-1/2} \int_0^z e^{-r} r^{\nu-1/2}\, dr.
$$
The conclusion follows.
\end{proof}

\begin{proof}[{Proof of Lemma \ref{lem:iter}}]
Suppose that \eqref{est1} holds for some $\lambda > -1/2$. By Theorem \ref{thm:prod},
\begin{align}
G_t^{\lambda'}(\cos\theta,1) & = C\, \Pi_{\lambda'}([-1,1]) \int
	G_{t/4}^{\lambda}\Big( v \cos\frac{\theta}2, 1\Big) \, d\Pi_{\lambda'}(v) \nonumber \\
& \simeq \simeq \frac{1}{t^{\lambda+1}}
	\int
	\exp\bigg( -c\frac{1-v\cos\frac{\theta}2}{t} \bigg) \, d\Pi_{\lambda'}(v). \label{est2}
\end{align}
Applying now Lemma \ref{lem:bes} to the last integral in \eqref{est2},
or rather to the upper and lower estimates that \eqref{est2} stands for, we see that
\begin{align*}
G_t^{\lambda'}(\cos\theta,1) & \simeq \simeq \bigg( 1 + \frac{\cos\frac{\theta}2}{t}\bigg)^{-\lambda'-1/2}
	\frac{1}{t^{\lambda+1}} \exp\bigg( - c\frac{1-\cos\frac{\theta}2}{t}\bigg) \\
& \simeq \simeq \bigg( t + \cos\frac{\theta}2\bigg)^{-\lambda'-1/2} \frac{1}{t^{\lambda'+1}}
	\exp\bigg( - c \frac{\theta^2}{t}\bigg). 
\end{align*}

The last expression is what we need except for the first factor. However, since we consider
$\lambda' > -1/2$,
$$
\bigg( t + \cos\frac{\theta}2\bigg)^{-\lambda'-1/2} \gtrsim 1, \qquad \theta \in [0,\pi], \quad 0<t\le T,
$$
and, on the other hand,
$$
\bigg( t + \cos\frac{\theta}2\bigg)^{-\lambda'-1/2} \lesssim
\exp\bigg( \epsilon \frac{\theta^2}{t}\bigg), 
	\qquad \theta \in [0,\pi], \qquad t>0,
$$
for any $\epsilon > 0$,
as can easily be seen by considering separately the cases $\theta \le \pi /2$ and $\theta > \pi /2$.
So this factor is insignificant, and Lemma \ref{lem:iter} follows.
\end{proof}

\subsection*{Step 3} Let $\alpha,\beta \ge -1/2$ be such that $\alpha+\beta$ is a dyadic number.
Then $\lambda = \alpha+\beta+1/2 \ge -1/2$ is also a dyadic number, so in view of Theorem \ref{thm:prod}
and Step 2 we may write
\begin{align*}
& G_t^{\ab}(\cos\theta,\cos\varphi)\\ & \qquad = \mathcal{C}_{\alpha,\beta} \iint
	G_{t/4}^{\lambda}\Big( u \sin\frac{\theta}2\sin\frac{\varphi}2 + v\cos\frac{\theta}2\cos\frac{\varphi}2,
		1\Big) \, d\Pi_{\alpha}(u) d\Pi_{\beta}(v) \\
& \qquad \simeq \simeq	\frac{1}{t^{\lambda+1}}
	\int d\Pi_{\alpha}(u) \int \exp\bigg( 
		-c \frac{1-u\sin\frac{\theta}2\sin\frac{\varphi}2 -v\cos\frac{\theta}2\cos\frac{\varphi}2}{t} 
		\bigg) \, d\Pi_{\beta}(v).
\end{align*} 
Applying now Lemma \ref{lem:bes} as in Step 2, first to the integral in $v$ and then to that in $u$,
and observing that
$$
1 - \sin\frac{\theta}2\sin\frac{\varphi}2 - \cos\frac{\theta}2\cos\frac{\varphi}2
	= 2\sin^2\frac{\theta-\varphi}4 \simeq (\theta-\varphi)^2, \qquad \theta,\varphi \in [0,\pi],
$$
we get
\begin{align*}
& G_t^{\ab}(\cos\theta,\cos\varphi) \\ &\qquad  \simeq \simeq \frac{1}{t^{\lambda+1}}
	\bigg( 1 + \frac{\sin\frac{\theta}2\sin\frac{\varphi}2}t\bigg)^{-\alpha-1/2}
	\bigg( 1 + \frac{\cos\frac{\theta}2\cos\frac{\varphi}2}t\bigg)^{-\beta-1/2}
	\exp\bigg( - c\frac{\sin^2\frac{\theta-\varphi}4}{t}\bigg).
\end{align*}
From this, \eqref{short_est} follows.
In the next steps, we will remove the restriction that $\alpha+\beta$ is dyadic.

\subsection*{Step 4} Suppose that $\lambda > 0$ is arbitrary. Then there exist 
$\epsilon, \epsilon' >0$ such that $2\lambda -\epsilon$ and $2\lambda + \epsilon'$
are dyadic numbers and $\lambda- \epsilon > 0$. 
Applying Theorem \ref{thm:comp}~twice, with $\delta = 0$, $\beta = \lambda$
and either $\alpha = \lambda-\epsilon$ or $\alpha=\lambda$, we obtain
\begin{align*}
\Phi_{\epsilon,0}(x) \Phi_{\epsilon,0}(y) G_t^{\lambda}(x,y) & \le 
	e^{\frac{\epsilon}2(2\lambda+1-\frac{\epsilon}2)t} G_t^{\lambda-\epsilon,\lambda}(x,y),\\
\Phi_{\epsilon',0}(x) \Phi_{\epsilon',0}(y) G_t^{\lambda+\epsilon',\lambda}(x,y) & \le
	e^{\frac{\epsilon'}2(2\lambda+1+\frac{\epsilon'}2)t} G_t^{\lambda}(x,y),
\end{align*}
for all $x,y \in [-1,1]$ and $t>0$. This implies
$$
\Big(\sin\frac{\theta}2\sin\frac{\varphi}2\Big)^{\epsilon'} 
	G_t^{\lambda+\epsilon',\lambda}(\cos\theta,\cos\varphi) \lesssim G_t^{\lambda}(\cos\theta,\cos\varphi)
	\lesssim \Big(\sin\frac{\theta}2\sin\frac{\varphi}2\Big)^{-\epsilon}
	G_t^{\lambda-\epsilon,\lambda}(\cos\theta,\cos\varphi),
$$
uniformly in $\theta,\varphi \in [0,\pi]$ and $0<t\le T$; 
here and later on endpoint values are understood in a limiting sense, if necessary, and may be infinite.
In an analogous way, we may vary the second type parameter and use Theorem \ref{thm:comp} to get
$$
\Big(\cos\frac{\theta}2\cos\frac{\varphi}2\Big)^{\epsilon'} 
	G_t^{\lambda,\lambda+\epsilon'}(\cos\theta,\cos\varphi) \lesssim G_t^{\lambda}(\cos\theta,\cos\varphi)
	\lesssim \Big(\cos\frac{\theta}2\cos\frac{\varphi}2\Big)^{-\epsilon}
	G_t^{\lambda,\lambda-\epsilon}(\cos\theta,\cos\varphi),
$$
uniformly in $\theta,\varphi \in [0,\pi]$ and $0<t\le T$. 

Next we combine these estimates with those obtained in Step 3.
For some positive constants $c_1$ and $c_2$ and all
$\theta,\varphi \in [0,\pi]$ and $0 < t \le T$, this leads to
\begin{align} 
& \; \frac{1}{F_t(\theta,\varphi)^{\epsilon'}}
	\Big(t+\sin\frac{\theta}2\sin\frac{\varphi}2\Big)^{-\lambda-1/2}
	\Big(t+\cos\frac{\theta}2\cos\frac{\varphi}2\Big)^{-\lambda-1/2}
	\frac{1}{\sqrt{t}} \exp\bigg( - c_1 \frac{(\theta-\varphi)^2}{t}\bigg) \nonumber \\
\lesssim & \; G_t^{\lambda}(\cos\theta,\cos\varphi) \label{4.YZ} \\ 
\lesssim & \; F_t(\theta,\varphi)^{\epsilon}
	\Big(t+\sin\frac{\theta}2\sin\frac{\varphi}2\Big)^{-\lambda-1/2}
	\Big(t+\cos\frac{\theta}2\cos\frac{\varphi}2\Big)^{-\lambda-1/2}
	\frac{1}{\sqrt{t}} \exp\bigg( - c_2 \frac{(\theta-\varphi)^2}{t}\bigg), \nonumber
\end{align}
where the auxiliary function $F_t$ is defined by
$$
F_t(\theta,\varphi)  = \min\Bigg( 1+
	\frac{t}{\sin\frac{\theta}2\sin\frac{\varphi}2} \;,\;
	1+ \frac{t}{\cos\frac{\theta}2\cos\frac{\varphi}2}\Bigg).
$$
It is easy to verify that
$$
F_t(\theta,\varphi) \simeq 1+ \frac{t}{\cos\frac{\theta-\varphi}2}.
$$
Notice that the bounds in \eqref{4.YZ} coincide with those of \eqref{short_est}, except for the factors
involving $F_t$. In the next step, we will show how to deal with these factors.

\subsection*{Step 5} We shall see how \eqref{4.YZ} implies \eqref{short_est} with $\alpha=\beta=\lambda>0$.
Clearly, the factors $F_t^{\epsilon}$ and $1/F_t^{\epsilon'}$ in \eqref{4.YZ} are of importance only when 
$\theta$ and $\varphi$ are close to opposite endpoints of the interval $[0,\pi]$. Indeed, we have
$$
F_t(\theta,\varphi)^{\epsilon'} \simeq 1 \simeq F_t(\theta,\varphi)^{\epsilon}, \qquad |\theta-\varphi| \le 
	\frac{2\pi}3, \quad 0 < t \le T.
$$
Thus from now on we may assume that $|\theta-\varphi| > 2\pi/3$. Moreover, for symmetry reasons 
it is enough to consider the case $\theta < \pi/3$ and $\varphi > 2\pi/3$.

Observe that under these assumptions, in the right-hand side of \eqref{short_est}
only the exponential factor is significant, since it behaves like $\exp(-c/t)$ and
the other factors are essentially contained between $1$ and some negative power of $t$.
So what we must prove is simply that
\begin{equation} \label{p1}
G_t^{\lambda}(\cos\theta,\cos\varphi) \simeq \simeq \exp\bigg( -\frac{c}t\bigg)
\end{equation}
for $0 < t \le T$. 
We first verify \eqref{p1} under the additional assumption that 
$\theta \ge e^{-c_0/t}$ or $\varphi \le \pi-e^{-c_0/t}$, where $c_0>0$ is
a sufficiently small constant. Clearly, either
of these conditions implies $\cos\frac{\theta-\varphi}2 \gtrsim e^{-c_0/t}$. Then for $0 < t \le T$,
$$
F_t(\theta,\varphi)^{\epsilon} \simeq \bigg( 1 + \frac{t}{\cos\frac{\theta-\varphi}2}\bigg)^{\epsilon}
	\lesssim \big( 1 + T e^{c_0/t}\big)^{\epsilon} \lesssim e^{c_0\epsilon /t},
$$
and analogous bounds hold for $F_t(\theta,\varphi)^{\epsilon'}$.
This implies that if $c_0$ is taken small enough, the factors $F_t^{\epsilon}$ and $1/F_t^{\epsilon'}$
will be insignificant in \eqref{4.YZ}, and \eqref{p1} follows.

Thus we proved the following.
\begin{lem} \label{lem:p02}
Let $\lambda > 0$. There exists a constant $c_0>0$ such that $\eqref{short_est}$ 
with $\alpha=\beta=\lambda >0$ holds for all $\theta, \varphi \in [0,\pi]$ and $0 < t \le T$,
except possibly when $\theta \le e^{-c_0/t}$ and $\varphi \ge \pi - e^{-c_0/t}$ or vice versa.
\qed
\end{lem}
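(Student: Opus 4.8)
The plan is to deduce the lemma directly from the two-sided bound \eqref{4.YZ} established in Step 4, whose only discrepancy from the desired estimate \eqref{short_est} lies in the extra factor $F_t(\theta,\varphi)^{\epsilon}$ in the upper bound and $F_t(\theta,\varphi)^{-\epsilon'}$ in the lower bound. So the entire task is to show that, away from the corner excluded in the statement, these two factors can be absorbed into the Gaussian exponential at the price of changing only the constants $c_1,c_2$. I would begin by recording the elementary comparison $F_t(\theta,\varphi)\simeq 1+t/\cos\frac{\theta-\varphi}2$ already noted in Step 4, so that controlling $F_t$ amounts to bounding $\cos\frac{\theta-\varphi}2$ from below.

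First I would dispose of the bulk region $|\theta-\varphi|\le 2\pi/3$: there $\cos\frac{\theta-\varphi}2\ge\cos\frac{\pi}3=1/2$, hence $F_t\simeq 1$ for $0<t\le T$ and both correction factors are $\simeq 1$, so \eqref{short_est} is immediate from \eqref{4.YZ}. This reduces matters to $|\theta-\varphi|>2\pi/3$, and by the symmetry $\theta\leftrightarrow\varphi$ of the kernel to the single configuration $\theta<\pi/3$, $\varphi>2\pi/3$. In this configuration $(\theta-\varphi)^2\simeq 1$, so on the right-hand side of \eqref{short_est} only the exponential factor is of any size (the remaining factors sit between $1$ and a negative power of $t$); the claim therefore collapses to the one-line assertion \eqref{p1}, namely $G_t^{\lambda}(\cos\theta,\cos\varphi)\simeq\simeq\exp(-c/t)$.

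The decisive point is the trigonometric identity $\cos\frac{\theta-\varphi}2=\sin\frac{\theta+(\pi-\varphi)}2\simeq\theta+(\pi-\varphi)$, valid throughout this corner. Hence, under the hypothesis of the lemma, i.e.\ that $\theta\ge e^{-c_0/t}$ or $\varphi\le\pi-e^{-c_0/t}$, I obtain $\cos\frac{\theta-\varphi}2\gtrsim e^{-c_0/t}$ and consequently $F_t(\theta,\varphi)^{\epsilon}\lesssim(1+Te^{c_0/t})^{\epsilon}\lesssim e^{c_0\epsilon/t}$, with the same bound for the exponent $\epsilon'$. Choosing $c_0$ small enough that $c_0\max(\epsilon,\epsilon')$ is dominated by the Gaussian constants, these factors are absorbed into $\exp(-c(\theta-\varphi)^2/t)$ and \eqref{p1}, hence \eqref{short_est}, follows; this is exactly the assertion of the lemma.

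The genuine difficulty, and the reason the statement carries an exclusion rather than covering all of $[0,\pi]^2$, is the sliver $\theta\le e^{-c_0/t}$, $\varphi\ge\pi-e^{-c_0/t}$ (or vice versa). There $\cos\frac{\theta-\varphi}2$ may be as small as $e^{-c_0/t}$, the crude bound on $F_t$ grows like a power of $e^{1/t}$ that the exponential can no longer swallow, and the comparison argument breaks down. I would therefore deliberately leave this region out of the present lemma and recover it afterwards by a different mechanism, combining the semigroup property with the rough global bound of Theorem \ref{thm:abs}, as foreshadowed in the plan for Step 5.
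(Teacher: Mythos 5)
Your proposal is correct and follows essentially the same route as the paper's own proof: reduce via $F_t(\theta,\varphi)\simeq 1+t/\cos\frac{\theta-\varphi}2$ to the corner configuration $\theta<\pi/3$, $\varphi>2\pi/3$, collapse \eqref{short_est} there to \eqref{p1}, and absorb $F_t^{\epsilon}$, $F_t^{-\epsilon'}\lesssim e^{c_0\max(\epsilon,\epsilon')/t}$ into the exponential by taking $c_0$ small. Your explicit identity $\cos\frac{\theta-\varphi}2=\sin\frac{\theta+(\pi-\varphi)}2\simeq\theta+(\pi-\varphi)$ merely makes precise the step the paper labels ``clearly,'' and your closing remark about the excluded sliver matches the paper's subsequent semigroup argument exactly.
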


Finally, suppose that $\theta < e^{-c_0/t}$ and $\varphi > \pi-e^{-c_0/t}$. By the semigroup property,
$$
G_{2t}^{\lambda}(\cos\theta,\cos\varphi) = \int_0^{\pi} G_t^{\lambda}(\cos\theta,\cos\psi)
	G_t^{\lambda}(\cos\psi,\cos\varphi) (\sin\psi)^{2\lambda+1}\, d\psi.
$$
We split the interval of integration here into $D_1 = (0,e^{-c_0/t})$,
$D_2 = (e^{-c_0/t}, \pi-e^{-c_0/t})$ and $D_3 = (\pi-e^{-c_0/t},\pi)$, and we may
assume that $c_0$ is small enough so that $D_2 \supset [\pi/3,2\pi/3]$.
Denote the resulting integrals by $\mathcal{I}_1$, $\mathcal{I}_2$ and $\mathcal{I}_3$, respectively.
To estimate $\mathcal{I}_2$, we apply Lemma \ref{lem:p02}, getting
\begin{align*}
\mathcal{I}_2 \simeq\simeq & \int_{e^{-c_0/t}}^{\pi-e^{-c_0/t}}
	\bigg[ \Big( t+ \sin\frac{\theta}2\sin\frac{\psi}2\Big) \Big( t + \cos\frac{\theta}2\cos\frac{\psi}2\Big)
	\Big(t + \sin\frac{\psi}2\sin\frac{\varphi}2\Big) \Big( t + \cos\frac{\psi}2\cos\frac{\varphi}2\Big) 
	\bigg]^{-\lambda-1/2}\\ &\qquad \times \frac{1}{t} \exp\bigg( -c\frac{(\theta-\psi)^2 + 
	(\psi-\varphi)^2}{t}\bigg) (\sin\psi)^{2\lambda+1}\, d\psi.
\end{align*}
Here $(\theta-\psi)^2+(\psi-\varphi)^2 \simeq 1$, which means that the exponential in the integrand
makes all the other factors insignificant. Thus
$$
\mathcal{I}_2 \simeq \simeq \exp\bigg( -\frac{c}t\bigg).
$$
To bound $\mathcal{I}_1$, we apply the rough estimate of Theorem \ref{thm:abs}
to the two kernels in the integrand. We have
$$
\mathcal{I}_1 \lesssim \int_0^{e^{-c_0/t}} t^{-C} (\sin\psi)^{2\lambda+1}\, d\psi \lesssim
	\exp\bigg(-\frac{c_0}{2t}\bigg).
$$
Since the case of $\mathcal{I}_3$ is analogous, it follows that 
$\mathcal{I}_1+\mathcal{I}_2+\mathcal{I}_3$ and thus also $G_t^{\lambda}(\cos\theta,\cos\varphi)$
satisfy the estimate \eqref{p1}. 

Altogether, we have proved the following.
\begin{lem} \label{lem:p03}
The estimates \eqref{short_est} hold for $\alpha=\beta=\lambda > 0$
and all $\theta,\varphi \in [0,\pi]$ and $0 < t \le T$. \\
\qed
\end{lem}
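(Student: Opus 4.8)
The plan is to upgrade the estimate \eqref{4.YZ} of Step 4, which already yields the target \eqref{short_est} apart from the correction factors $F_t^{\epsilon}$ (upper bound) and $1/F_t^{\epsilon'}$ (lower bound). Since $F_t(\theta,\varphi)\simeq 1+t/\cos\frac{\theta-\varphi}{2}$, these are harmless wherever $\cos\frac{\theta-\varphi}{2}$ is bounded below; in particular $F_t\simeq 1$ once $|\theta-\varphi|\le 2\pi/3$, so \eqref{short_est} follows immediately from \eqref{4.YZ} in that range. First I would therefore reduce to $|\theta-\varphi|>2\pi/3$ and, by symmetry, to $\theta<\pi/3$ and $\varphi>2\pi/3$. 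There $(\theta-\varphi)^2\simeq 1$, the non-Gaussian factors on the right of \eqref{short_est} lie between $1$ and a negative power of $t$, and the whole claim collapses to the two-sided bound \eqref{p1}, i.e. $G_t^{\lambda}(\cos\theta,\cos\varphi)\simeq\simeq\exp(-c/t)$. A direct appeal to \eqref{4.YZ} still settles this whenever $\cos\frac{\theta-\varphi}{2}\gtrsim e^{-c_0/t}$: then $F_t^{\epsilon}\lesssim(1+Te^{c_0/t})^{\epsilon}\lesssim e^{c_0\epsilon/t}$, which, for $c_0$ small, is absorbed into $\exp(-c/t)$ on both sides. This disposes of everything except the configuration where $\theta$ and $\varphi$ are both exponentially close to opposite endpoints, and is exactly the content of Lemma \ref{lem:p02}.

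The remaining double-endpoint case, say $\theta<e^{-c_0/t}$ and $\varphi>\pi-e^{-c_0/t}$, is where \eqref{4.YZ} becomes useless, since there the factor $F_t^{\epsilon}$ grows like $e^{c/t}$. To sidestep it I would invoke the semigroup property,
$$
G_{2t}^{\lambda}(\cos\theta,\cos\varphi)=\int_0^{\pi}G_t^{\lambda}(\cos\theta,\cos\psi)\,G_t^{\lambda}(\cos\psi,\cos\varphi)\,(\sin\psi)^{2\lambda+1}\,d\psi,
$$
and split the $\psi$-range into the endpoint slivers $D_1=(0,e^{-c_0/t})$ and $D_3=(\pi-e^{-c_0/t},\pi)$ and the bulk $D_2$ between them. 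For $\psi\in D_2$ the intermediate point is away from both endpoints, so Lemma \ref{lem:p02} applies to each factor; as $(\theta-\psi)^2+(\psi-\varphi)^2\simeq 1$ there, the two Gaussians dominate and the $D_2$-integral is $\simeq\simeq\exp(-c/t)$, producing the required main term. On $D_1$ and $D_3$ I would bound both kernels crudely by the rough estimate $G_t^{\lambda}\lesssim t^{-C_0}$ of Theorem \ref{thm:abs} and integrate against the exponentially small length of the sliver, so that $\mathcal{I}_1,\mathcal{I}_3\lesssim t^{-C}e^{-c_0/t}\lesssim\exp(-c_0/(2t))$, negligible beside the main term. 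Summing the three pieces gives \eqref{p1} in this last case and finishes the proof.

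The hard part is precisely this double-endpoint regime: the comparison principle underlying \eqref{4.YZ} is quantitatively too weak there, and the difficulty is to replace it by a \emph{lossless} argument. The semigroup splitting is the decisive device, because it forces the intermediate variable $\psi$ into the good region $D_2$ where the already-established partial estimate of Lemma \ref{lem:p02} is sharp, while the exponentially small measure of the endpoint slivers $D_1,D_3$ comfortably beats the merely polynomial-in-$1/t$ crudeness of Theorem \ref{thm:abs}.
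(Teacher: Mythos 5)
Your proposal is correct and is essentially identical to the paper's own argument in Step 5: the same reduction of \eqref{4.YZ} to the two-sided bound \eqref{p1} for $\theta<\pi/3$, $\varphi>2\pi/3$, the same intermediate result (the paper's Lemma \ref{lem:p02}) covering all but the double-endpoint regime, and the same semigroup splitting into $D_1,D_2,D_3$ with Lemma \ref{lem:p02} applied on the bulk $D_2$ and the rough bound of Theorem \ref{thm:abs} absorbing the exponentially short slivers. No gaps to report.
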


\subsection*{Step 6} From Lemma \ref{lem:p03} with $\varphi=0$, we get as in the last part of Step 2
\begin{equation} \label{est7}
G_t^{\lambda}(\cos\theta,1) \simeq \simeq \frac{1}{t^{\lambda+1}} \exp\bigg(-c\frac{\theta^2}{t}\bigg),
 \qquad \theta \in [0,\pi], \quad 0< t \le T,
\end{equation}
provided that $\lambda \ge 0$; the case $\lambda=0$ is covered by Step 1.
Applying repeatedly Lemma \ref{lem:iter} as in Step 2, we conclude that \eqref{est7} holds for each 
$\lambda \ge -1/2$, since the case $\lambda = -1/2$ follows again from Step 1.

Finally, we combine \eqref{est7} for arbitrary $\lambda = \alpha+\beta + 1/2 \ge -1/2$
with the reduction formula, as done in Step 3 for dyadic $\lambda$. This establishes
the short time bound of Theorem \ref{thm:main} for general $\alpha,\beta \ge -1/2$.

The proof of Theorem \ref{thm:main} is complete.

\vspace{5pt}

\section{The heat maximal operators} \label{sec:max}

We shall now consider $d$-dimensional Jacobi settings. Each of the
 semigroups $T_t^{\ab}$,\hskip5pt $\mathcal{T}_t^{\ab}$ and  $\mathbb{T}_t^{\ab}$
has a natural $d$-dimensional  extension, see \cite[Section 2]{NoSj}.
In particular, their kernels are simply tensor  products of the corresponding one-dimensional heat kernels.
Letting now  $\alpha, \beta \in (-1,\infty)^d$ denote type multi-parameters, 
we can use the same notation as before for the semigroups, their kernels
and other related notions. The corresponding measure spaces will then be $([-1,1]^d,d\r_{\ab})$,
$([0,\pi]^d,d\m_{\ab})$ and $([0,\pi]^d,d\theta)$, respectively, where 
$$
d\r_{\ab} = \bigotimes_{i=1}^d d\r_{\alpha_i,\beta_i}, \qquad 
d\m_{\ab} = \bigotimes_{i=1}^d d\m_{\alpha_i,\beta_i},
$$
and $d\theta$ is  the $d$-dimensional Lebesgue measure in $[0,\pi]^d$.

This allows us to introduce multi-dimensional  maximal operators
$$
T^{\ab}_* f(x) = \sup_{t>0} \big|T_t^{\ab}f(x)\big|
$$
and 
$\mathcal{T}^{\ab}_*$ and $\mathbb{T}^{\ab}_*$ with analogous definitions.
Using
 Theorem \ref{thm:main}, we shall  show that these operators
satisfy weak type $(1,1)$ estimates in the corresponding measure spaces.
\begin{thm} \label{thm:max}
Let $d \ge 1$ and assume that $\alpha,\beta \in [-1/2,\infty)^d$. Then
\begin{itemize}
\item[(i)] $T_*^{\ab}$ is bounded from $L^1(d\r_{\ab})$ to weak $L^1(d\r_{\ab})$;
\item[(ii)] $\mathcal{T}_*^{\ab}$ is bounded from $L^1(d\m_{\ab})$ to weak $L^1(d\m_{\ab})$;
\item[(iii)] $\mathbb{T}_*^{\ab}$ is bounded from $L^1(d\theta)$ to weak $L^1(d\theta)$.
\end{itemize}
\end{thm}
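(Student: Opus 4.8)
The plan is to reduce all three weak type inequalities to a single \emph{global Gaussian bound} for the relevant kernel on a space of homogeneous type, and then to dominate the maximal operator pointwise by a Hardy--Littlewood maximal operator. First I would pass to the trigonometric variables $x_i=\cos\theta_i$. Under this change of variables $d\r_{\ab}$ and $d\m_{\ab}$ differ only by a dimensional constant, and the kernel of $T_t^{\ab}$ with respect to $d\m_{\ab}$ equals a constant times $G_t^{\ab}(\cos\theta,\cos\varphi)$, a tensor product of one-dimensional factors. By \eqref{rel_ker} we have $\mathcal{G}_t^{\ab}(\theta,\varphi)\le C\,G_t^{\ab}(\cos\theta,\cos\varphi)$ and $\mathbb{G}_t^{\ab}(\theta,\varphi)=W(\theta)W(\varphi)\,\mathcal{G}_t^{\ab}(\theta,\varphi)$, where $W(\theta)=\prod_{i=1}^d\big(\sin\tfrac{\theta_i}2\big)^{\alpha_i+1/2}\big(\cos\tfrac{\theta_i}2\big)^{\beta_i+1/2}\ge 0$ (here $\alpha,\beta\ge-1/2$ is already used). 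Thus parts (i) and (ii) will both be governed by the kernel $G_t^{\ab}(\cos\theta,\cos\varphi)$ against the measure $d\m_{\ab}$, while part (iii) is governed by the weighted kernel against Lebesgue measure $d\theta$.

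The technical heart is the following estimate. Writing $d(\theta,\varphi)=|\theta-\varphi|$ for the Euclidean distance on $[0,\pi]^d$ and $\nu$ for the pertinent measure ($d\m_{\ab}$ or $d\theta$), I must show that the kernel $\mathcal K_t$ under consideration satisfies
\[
\mathcal K_t(\theta,\varphi)\ \lesssim\ \frac{1}{\nu\big(B(\theta,\sqrt t)\big)}\,\exp\!\Big(-c\,\frac{d(\theta,\varphi)^2}{t}\Big),\qquad \theta,\varphi\in[0,\pi]^d,\ t>0.
\]
I would prove this one dimension at a time and then tensorize, using that $\nu\big(B(\theta,\sqrt t)\big)\simeq\prod_i\nu_i\big(B(\theta_i,\sqrt t)\big)$ by doubling. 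In one dimension and for $0<t\le T$, the upper bound of Theorem~\ref{thm:main} supplies the prefactor $[t+\theta\varphi]^{-\alpha-1/2}[t+(\pi-\theta)(\pi-\varphi)]^{-\beta-1/2}t^{-1/2}$, while $\m_{\ab}\big(B(\theta,\sqrt t)\big)\simeq\sqrt t\,(\theta+\sqrt t)^{2\alpha+1}(\pi-\theta+\sqrt t)^{2\beta+1}$. Comparing the two, and in setting (iii) inserting $W(\theta)W(\varphi)$, the residual discrepancy is in every regime bounded by a power of $d(\theta,\varphi)^2/t$, which is absorbed into the Gaussian after shrinking $c$ (via $s^p\lesssim e^{\epsilon s}$). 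For $t\ge T$ all factors are comparable to constants, so the bound is trivial there and no local/global splitting is needed. The cases $\theta\simeq\varphi$, $\varphi\ll\theta$, and $\theta,\varphi$ near opposite endpoints must be checked separately; this is exactly where $\alpha,\beta\ge-1/2$ enters decisively, through the nonnegativity of the exponents $2\alpha+1,\,2\beta+1$ and of the weight $W$.

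Once the Gaussian bound is established, the rest is standard. The measures $d\m_{\ab}$ and $d\theta$ are doubling on $[0,\pi]^d$ (power weights are $A_\infty$), so $([0,\pi]^d,|\cdot|,\nu)$ is a space of homogeneous type. Writing $S_t$ for any of $T_t^{\ab},\mathcal{T}_t^{\ab},\mathbb{T}_t^{\ab}$, splitting the integration domain into the dyadic annuli $d(\theta,\varphi)\simeq 2^k\sqrt t$, and using doubling to compare $\nu\big(B(\theta,2^k\sqrt t)\big)$ with $\nu\big(B(\theta,\sqrt t)\big)$, the Gaussian bound gives $\big|S_t f(\theta)\big|\lesssim\sum_k e^{-c4^k}2^{kD}\,M_\nu f(\theta)\lesssim M_\nu f(\theta)$ uniformly in $t$, where $D$ is the doubling exponent and $M_\nu$ the Hardy--Littlewood maximal operator. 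Taking the supremum over $t>0$ bounds the maximal operator by $M_\nu$, which is of weak type $(1,1)$ with respect to $\nu$ by the Coifman--Weiss theory. Finally, since $d\r_{\ab}$ and $d\m_{\ab}$ differ by a constant, under which the weak type $(1,1)$ inequality is invariant, this yields (i); parts (ii) and (iii) come out directly.

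I expect the sole genuine obstacle to be the one-dimensional kernel comparison in the Gaussian bound, that is, showing that the sharp but anisotropic prefactors of Theorem~\ref{thm:main}, and in (iii) their product with $W(\theta)W(\varphi)$, are dominated by $1/\nu\big(B(\theta,\sqrt t)\big)$ up to an exponentially small factor that the Gaussian absorbs, uniformly across the endpoint regimes. Everything downstream---tensorization, the doubling property, the annular domination by $M_\nu$, and the passage between the three settings---is routine.
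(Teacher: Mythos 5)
Your proposal is correct, but for the crucial part (ii) it takes a genuinely different route from the paper's. Parts (iii) and the reduction of (i) to (ii) you handle just as the paper does: by \eqref{rel_ker} and Theorem \ref{thm:main}, since $\alpha_i+1/2,\beta_i+1/2\ge 0$ the weighted kernel $\mathbb{G}_t^{\ab}$ is dominated by the Euclidean Gauss--Weierstrass kernel, and under $x=\cos\theta$ the measures agree up to a constant while the kernels differ by the factor $e^{t\sum_i(\frac{\alpha_i+\beta_i+1}{2})^2}$, harmless for $t\le T$; the range $t\ge T$ is trivial because the kernels are then bounded and the measure spaces are finite. For (ii), however, the paper does \emph{not} prove your ball-volume Gaussian bound. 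Instead it splits $[0,\pi]^d$ into $2^d$ corner cubes, inductively integrates out those coordinates in which $\varphi_i$ lies near the far endpoint (where the one-dimensional kernel is bounded, so the dimension drops), and on the remaining cube $(0,3\pi/4)^d$ matches the two-sided bounds of Theorem \ref{thm:main} against the explicit Laguerre heat kernel $K_{Ct}^{\alpha}$, then cites the weak type $(1,1)$ of the multi-dimensional Laguerre maximal operator from \cite[Theorem 1.1]{NoSj1}. Your alternative --- verifying coordinatewise that $\mathcal{G}_t^{\ab}(\theta,\varphi)\lesssim \m_{\ab}\bigl(B(\theta,\sqrt t)\bigr)^{-1}\exp\bigl(-c|\theta-\varphi|^2/t\bigr)$, with the discrepancy between $(t+\theta\varphi)^{-\alpha-1/2}(t+(\pi-\theta)(\pi-\varphi))^{-\beta-1/2}$ and $(\theta+\sqrt t)^{-2\alpha-1}(\pi-\theta+\sqrt t)^{-2\beta-1}$ absorbed via $(1+s)^{p}\lesssim_{\epsilon} e^{\epsilon s}$ at $s=(\theta-\varphi)^2/t$ (the discrepancy is only large when $\varphi\ll\theta$, or near opposite endpoints, exactly where the Gaussian pays), followed by annular domination by the Hardy--Littlewood maximal operator of the doubling measure $d\m_{\ab}$ --- does go through; I checked the regimes, and $\alpha,\beta\ge -1/2$ enters precisely through the nonnegativity of the exponents, as you say. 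What your route buys is self-containedness: the genuinely hard content of \cite{NoSj1} is the global, Gaussian-measure-like region of the Laguerre setting, whereas the paper uses that theorem only on a fixed cube, where a local Hardy--Littlewood argument such as yours suffices; so the appeal to \cite{NoSj1} can be dispensed with. What the paper's route buys is brevity given that \cite{NoSj1} was already in place, plus an explicit structural identification of the local Jacobi and Laguerre heat kernels, which the authors also exploit to motivate the pencil-phenomenon discussion for parameters outside $[-1/2,\infty)^d$. Two details to nail down in your write-up: state the Gaussian bound with balls of the metric space $[0,\pi]^d$ (i.e., restricted balls, whose measure saturates for large $t$), and record that doubling of $\m_{\ab}$ --- hence the weak type $(1,1)$ of $M_{\nu}$ via the Vitali/Coifman--Weiss covering argument --- holds since $2\alpha_i+1,\,2\beta_i+1\ge 0$ (indeed any exponent $>-1$ would do).
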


An important consequence of Theorem \ref{thm:max} is the almost everywhere boundary convergence
for the Jacobi semigroups applied to $L^1$ functions. 
Note that by the subordination principle, Theorem \ref{thm:max} implies weak type $(1,1)$ estimates,
and thus also convergence results, for the multi-dimensional Poisson-Jacobi semigroups.

We briefly discuss $L^p$ bounds for these operators.
For $\alpha,\beta \in (-1,\infty)^d$, the boundedness of $T_*^{\ab}$ in $L^p(d\r_{\ab})$, $1<p\le \infty$,
follows by Stein's general maximal theorem \cite[Chapter 3]{topics};
see \cite[p.\,346]{NoSj}. In the restricted range  
$\ab \in [-1/2,\infty)^d$, it can also be obtained by interpolation between Theorem
\ref{thm:max}(i) and the trivial boundedness in $L^{\infty}$.
The case of $\mathcal{T}_*^{\ab}$ is much the same. In fact, $\mathcal{T}_*^{\ab}$ is  controlled
by $T^{\ab}_*$, as can be seen from the proof given below, and so it inherits the $L^p$ 
mapping properties of $T^{\ab}_*$. The $L^p$-boundedness of $\mathbb{T}_*^{\ab}$ is even simpler.
Indeed, as pointed out in the proof below, Theorem \ref{thm:main}
implies that when $\ab \in [-1/2,\infty)^d$, the operator  $\mathbb{T}_*^{\ab}$ is controlled by 
the standard maximal function in $[0,\pi]^d$ and hence $L^p$-bounded for $p>1$.
When $\alpha$ or $\beta$ are  not both in $[-1/2,\infty)^d$, the estimates of Theorem \ref{thm:main} can be
expected to hold in the same form, and this suggests that the behavior of $\mathbb{T}_*^{\ab}$ 
admits a similar anomaly to that occurring in certain Laguerre function settings and called
\emph{pencil phenomenon} \cite{NoSj3}.

\begin{proof}[Proof of Theorem \ref{thm:max}]
From Theorem \ref{thm:main} it follows that for large $t$  the three Jacobi heat kernels are bounded.
Thus, from now on, we may consider only the maximal operators defined
by taking suprema in the restricted range $0<t\le 1$.

We first treat (iii) and (i). In view of Theorem \ref{thm:main} and \eqref{rel_ker}, 
$$
\mathbb{G}_t^{\ab}(\theta,\varphi) \lesssim \frac{1}{\sqrt{t}}\exp\bigg( -c\frac{(\theta-\varphi)^2}{t}
	\bigg), \qquad \theta,\varphi \in [0,\pi], \quad 0 < t \le 1,
$$
where $c>0$ depends only on $\alpha$ and $\beta$. The right-hand side here is essentially the standard
Gaussian kernel, and so $\mathbb{T}_*^{\ab}$ can be controlled by the Hardy-Littlewood maximal
operator restricted to $[0,\pi]^d$. Therefore, $\mathbb{T}_*^{\ab}$ is of weak type $(1,1)$.

Next, we show that (i) follows from (ii). Observe that for $f \in L^1(d\r_{\ab})$ we have
$$
T_t^{\ab}f(\cos\theta)
 = e^{t\sum_{i=1}^d \left(\frac{\alpha_i+\beta_i+1}2\right)^2}
\mathcal{T}_t^{\ab}(f \circ \cos)(\theta), \qquad \theta \in [0,\pi],
$$
and consequently $|T_t^{\ab}f(\cos\theta)| \simeq  |\mathcal{T}_t^{\ab}(f\circ\cos)(\theta)| $
for $0 < t \le 1$. Thus (ii) implies (i). 

We pass to proving (ii). 
The kernel to be considered is then 
$ \mathcal{G}_t^{\alpha,\beta} = \bigotimes_{i=1}^d \mathcal{G}_t^{\alpha_i,\beta_i}$,
and we must estimate the supremum in $0<t\le 1$ of
\begin{equation} \label{max_int}
\left|\int_{(0,\pi)^d}  \mathcal{G}_t^{\alpha,\beta}(\theta,\varphi) f(\varphi)\,
	d\m_{\ab}(\varphi)\right|,  \qquad \theta \in (0,\pi)^d,
\end{equation}
with $f \in L^1(d\m_{\ab})$. Here we may assume that $f \ge 0$. 
If we split each coordinate interval $(0,\pi)$ in halves, the cube will be split in $2^d$
subcubes. It will be enough to consider $\theta$ in one of these, say $(0,\pi/2)^d$, since
the other subcubes can be treated like the one we select, if one places the Laguerre origin in
the argument below at each corner of the large cube.

Assuming thus $\theta \in (0,\pi/2)^d$,
we first do away with the integration over the set where $3\pi/4 < \varphi_i < \pi$ for some $i$. 
If $\theta_i< \pi/2$ and $\varphi_i > 3\pi/4$, it follows from Theorem \ref{thm:main} that 
$\mathcal{G}_t^{\alpha_i,\beta_i}(\theta_i,\varphi_i)\lesssim 1$. For any nonempty subset $\Lambda$
of $\{1,2,\ldots,d\}$, consider that part of the integral in \eqref{max_int} taken over the set
$$
\{\varphi: 3\pi/4 < \varphi_i < \pi \; \textrm{for}\; i \in \Lambda\; 
\textrm{and}\; 	0< \varphi_i < 3\pi/4 \; \textrm{for}\; i \notin \Lambda\}.
$$

Carrying out the integrations over the interval $(3\pi/4,\pi)$ in each variable $\varphi_i$ with 
$i \in \Lambda$, we can estimate this part of \eqref{max_int} by
$$
\int {\prod}' \mathcal{G}_t^{\alpha_i,\beta_i}(\theta_i,\varphi_i) f_{\Lambda}(\varphi')\,
	d\m'_{\ab}(\varphi').
$$
Here the product ${\prod}'$ is taken only over $i \in \{1,\ldots,d\}\backslash \Lambda$, and
$\varphi'$ consists only of the corresponding coordinates $\varphi_i$. Further, $f_{\Lambda}(\varphi')$
is the result of integrating the given function $f$ with respect to $d\m_{\alpha_i,\beta_i}$ 
in $(3\pi/4,\pi)$
for each $i \in \Lambda$. Finally, $d\m'_{\ab}$ is the product of those measures $d\m_{\alpha_i,\beta_i}$
with $i \notin \Lambda$, each restricted to $(0,3\pi/4)$.
 In the extreme case $\Lambda = \{1,2, \dots, d\}$, the expression
should be interpreted as $\int_{(3\pi/4, \pi)^d}f{d\m_{\alpha,\beta}}$, which certainly satisfies
the weak type (1,1) estimate, so this case can be neglected.

We thus arrive at an integral of the same type as that in \eqref{max_int} but in lower 
dimension and with the important difference that the integration is now only over $(0,3\pi/4)$
in each variable. Summing up, this means that when estimating \eqref{max_int}, we can restrict
the integration to the cube $(0,3\pi/4)^d$. What we must control is thus the operator
$$
Mf(\theta) = \sup_{0<t\le T}  \int_{(0, 3\pi/4)^d} \mathcal
G_t^{\alpha,\beta}(\theta, \varphi)f(\varphi)\,
d\m_{\alpha,\beta}(\varphi), \qquad \theta \in (0, \pi/2)^d,
$$
for $0\le f \in L^1(d\m_{\alpha,\beta}).$
We will show that the weak type $(1,1)$ of $M$ follows from the analogous property of a certain
Laguerre maximal operator, which was proved by the authors in \cite{NoSj1} to satisfy the weak type $(1,1)$
estimate in the appropriate measure space.

Assuming for a moment that $d=1$, we observe that by Theorem \ref{thm:main}
$$
\mathcal{G}_t^{\ab}(\theta,\varphi) \simeq \simeq (t+\theta \varphi)^{-\alpha-1/2}
	\frac{1}{\sqrt{t}} \exp\bigg( -c \frac{(\theta-\varphi)^2}{t}\bigg),
$$
uniformly in $\theta,\varphi \in (0,3\pi/4)$ and $0<t \le 1$. On the other hand, the one-dimensional
heat kernel associated with the Laguerre system $\{\psi_k^{\alpha}\}$ considered in \cite{NoSj1}
is expressed explicitly by
$$
K_t^{\alpha}(x,y) = \frac{1}{2\sinh t} \exp\bigg( -\frac{1}{4} \coth t \,\big(x^2+y^2\big)\bigg)
	(xy)^{-\alpha} I_{\alpha}\bigg( \frac{xy}{2\sinh t} \bigg)
$$
for $x,y,t>0$ and $\alpha>-1$; here $I_{\alpha}$ is the modified Bessel function of the first kind
and order $\alpha$. By means of the well-known asymptotics
$$
I_{\alpha}(z) \simeq z^{\alpha}, \quad z \to 0^+, \qquad \textrm{and} \qquad
I_{\alpha}(z) \simeq z^{-1/2} \exp(z), \quad z \to \infty,
$$
it is straightforward to check that
$$
K_t^{\alpha}(x,y) \simeq \simeq (t+xy)^{-\alpha-1/2} 
	\frac{1}{\sqrt{t}} \exp\bigg( -c \frac{(x-y)^2}{t}\bigg),
$$
uniformly in $x,y \in (0,3\pi/4)$ and $0< t \le T$, for any fixed $T>0$. 
Thus we see that there exists a constant $C>0$ such that
$$
\mathcal{G}_t^{\ab}(\theta,\varphi) \lesssim K^{\alpha}_{Ct}(\theta,\varphi), \qquad
	\theta,\varphi \in (0,3\pi/4), \quad 0< t \le 1.
$$
Moreover, the related Laguerre measure is $d\eta_{\alpha}(x) = x^{2\alpha+1}dx$ and hence
$d\mu_{\ab}(\theta) \simeq d\eta_{\alpha}(\theta)$ for $\theta \in (0,3\pi/4)$.

Coming back to arbitrary dimension $d \ge 1$ and taking into account the tensor product structures of 
the Jacobi and Laguerre settings, we infer that for some $C>0$
$$
\mathcal{G}_t^{\ab}(\theta,\varphi) \lesssim
	K_{Ct}^{\alpha}(\theta,\varphi), \qquad \theta,\varphi \in (0,3\pi/4)^d, \quad 0<t\le 1.
$$
Further, the corresponding Jacobi and Laguerre measures
are comparable on $(0,3\pi/4)^d$. This reveals that $M$ is controlled pointwise by the Laguerre
maximal operator
$$
K_*^{\alpha}f(x) = \sup_{t>0} \int_{(0,\infty)^d} K_t^{\alpha}(x,y) |f(y)|\, d\eta_{\alpha}(y)
$$
restricted to the cube $(0,3\pi/4)^d$. By \cite[Theorem 1.1]{NoSj1}, $K_*^{\alpha}$ satisfies
the weak type $(1,1)$ estimate with respect to the measure space $((0,\infty)^d,d\eta_{\alpha})$.
We conclude that $M$ is of weak type $(1,1)$ with respect to $((0,3\pi/4)^d,d\mu_{\ab})$, as desired. 

The proof of Theorem \ref{thm:max} is complete.
\end{proof}

\vspace{5pt}

\section*{Appendix: Poisson kernel estimates} \label{sec:poisson}

We complement the Jacobi heat kernel estimates by showing the following sharp bounds for the
Poisson-Jacobi kernel in the Jacobi trigonometric polynomial setting.
Clearly, this result can easily be transferred to the Jacobi trigonometric `function' setting.
In contrast to the preceding argument, the proof is based on an exact, 
positive representation of the kernel.
\begin{thm} \label{thm:Poisson}
Assume that $\alpha,\beta \ge -1/2$. Given any $T > 0$, we have
$$
\mathcal{H}_t^{\ab}(\theta,\varphi) \simeq \Big( t^2+ \theta^2+\varphi^2 \Big)^{-\alpha-1/2}
	\Big( t^2 + (\pi-\theta)^2 + (\pi-\varphi)^2 \Big)^{-\beta-1/2}
		\frac{t}{t^2+(\theta-\varphi)^2},
$$
uniformly in $\theta, \varphi \in [0,\pi]$ and $0 < t \le T$, and
$$
\mathcal{H}_t^{\ab}(\theta,\varphi) \simeq \exp\bigg( -t \frac{\alpha+\beta+1}{2} \bigg),
$$
uniformly in $\theta,\varphi \in [0,\pi]$ and $t \ge T$.
\end{thm}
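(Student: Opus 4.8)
The plan is to avoid the oscillating series entirely and argue, as announced in the Appendix, from an exact positive integral representation. Since $\alpha+\beta+1\ge0$ we have $|n+\frac{\alpha+\beta+1}2|=n+\frac{\alpha+\beta+1}2$, so $\mathcal H_t^{\ab}$ is the Poisson (subordinated) kernel attached to the heat kernel $\mathcal G_t^{\ab}$. I would first record the representation obtained in \cite{NoSj2} from the Dijksma--Koornwinder product formula: repeating the computation that led to Theorem \ref{thm:prod}, but now multiplying the product formula for $\P_n^{\ab}(\theta)\P_n^{\ab}(\varphi)$ by $e^{-t(n+\frac{\alpha+\beta+1}2)}$ and summing over $n$, the resulting series is the even part (in the first variable) of an ultraspherical Poisson kernel of parameter $\lambda=\alpha+\beta+1/2$ with the second variable at the pole, taken at time $t/2$. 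This gives
\[
\mathcal H_t^{\ab}(\theta,\varphi)=\mathcal C_{\ab}\iint \mathfrak q_t\big(u\sin\tfrac\theta2\sin\tfrac\varphi2+v\cos\tfrac\theta2\cos\tfrac\varphi2\big)\,d\Pi_\alpha(u)\,d\Pi_\beta(v),
\]
with a positive constant $\mathcal C_{\ab}$ and an explicit, strictly positive one-dimensional kernel $\mathfrak q_t$.

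Next I would estimate $\mathfrak q_t$ sharply. From its closed form -- a Poisson--Gegenbauer kernel evaluated at the pole, with $r=e^{-t/2}$ -- one obtains, uniformly in $s\in[-1,1]$,
\[
\mathfrak q_t(s)\simeq t\,(t^2+(1-s))^{-\alpha-\beta-2},\quad 0<t\le T,\qquad \mathfrak q_t(s)\simeq e^{-t(\alpha+\beta+1)/2},\quad t\ge T.
\]
The exponent $\alpha+\beta+2=\lambda+3/2$ comes from the factor $2m+2\lambda+1$ carried by $1/h_m^\lambda$; the same small-time bound also follows, if one prefers, by subordinating the endpoint estimate of Lemma \ref{lem:1}. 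For $t\ge T$ the $m=0$ term dominates and is constant in $s$, so integrating the second bound against the finite measures $\Pi_\alpha\times\Pi_\beta$ immediately yields the large-time statement $\mathcal H_t^{\ab}\simeq e^{-t(\alpha+\beta+1)/2}$.

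The heart of the matter is the small-time integration. Write $p=\sin\frac\theta2\sin\frac\varphi2$, $q=\cos\frac\theta2\cos\frac\varphi2$ and $D=t^2+(1-p-q)$, and note the elementary comparisons
\[
D\simeq t^2+(\theta-\varphi)^2,\qquad D+p=t^2+1-q\simeq t^2+\theta^2+\varphi^2,\qquad D+q=t^2+1-p\simeq t^2+(\pi-\theta)^2+(\pi-\varphi)^2,
\]
valid uniformly on $[0,\pi]^2$. Inserting the small-time bound for $\mathfrak q_t$, pulling out the factor $t$, and using $1-(up+vq)=D+p(1-u)+q(1-v)$, I would write the remaining power as a Gamma integral, $X^{-\alpha-\beta-2}=\frac1{\Gamma(\alpha+\beta+2)}\int_0^\infty\sigma^{\alpha+\beta+1}e^{-\sigma X}\,d\sigma$. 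The $u$- and $v$-integrations then decouple and are evaluated by Lemma \ref{lem:bes}, contributing $(1+\sigma p)^{-\alpha-1/2}$ and $(1+\sigma q)^{-\beta-1/2}$, so the problem collapses to the single integral
\[
\int_0^\infty\sigma^{\alpha+\beta+1}e^{-\sigma D}(1+\sigma p)^{-\alpha-1/2}(1+\sigma q)^{-\beta-1/2}\,d\sigma\simeq D^{-1}(D+p)^{-\alpha-1/2}(D+q)^{-\beta-1/2}.
\]
Multiplying by $t$ and invoking the three comparisons above reproduces the asserted formula for $\mathcal H_t^{\ab}$.

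The step I expect to be the main obstacle is the sharp, two-sided evaluation of this last integral, that is, justifying that its mass concentrates at $\sigma\simeq 1/D$ and reading off the slowly varying factors there. The delicate point is that naively extending the $u,v$-integrals to infinity -- equivalently, dropping the $1$ inside $(1+\sigma p)$ -- would produce the spurious $p^{-\alpha-1/2}q^{-\beta-1/2}$ instead of $(D+p)^{-\alpha-1/2}(D+q)^{-\beta-1/2}$; it is precisely the bounded total mass of $\Pi_\alpha,\Pi_\beta$ that regularizes the would-be singularity as $p$ or $q\to0$. Making the lower bound rigorous requires splitting according to the relative sizes of $D$, $p$ and $q$ and checking the matching in each regime; the elementary one-dimensional estimate $\int_0^1 r^{\nu-1/2}(A+Br)^{-\kappa}\,dr\simeq A^{\nu+1/2-\kappa}(A+B)^{-\nu-1/2}$ (for $\kappa>\nu+1/2$), the Poisson analogue of Lemma \ref{lem:bes}, is the device that keeps this bookkeeping uniform.
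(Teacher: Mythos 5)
Your proposal is correct, and it reaches the stated bounds by a route that is genuinely different from the paper's in its key technical step, though it departs from the same positive representation: your double integral of $\mathfrak{q}_t$ against $d\Pi_\alpha\,d\Pi_\beta$ is exactly the formula \eqref{PJker} quoted from \cite[Proposition 4.1]{NoSj2}, since the Poisson--Gegenbauer kernel at the pole with $r=e^{-t/2}$ sums in closed form to a constant times $\sinh\frac{t}2\,\big(\cosh\frac{t}2-1+(1-s)\big)^{-\alpha-\beta-2}$, and your two regimes for $\mathfrak{q}_t$ follow from $\sinh\frac{t}2\simeq t$, $\cosh\frac{t}2-1\simeq t^2$ for $0<t\le T$ and from the base being $\simeq e^{t/2}$ for $t\ge T$. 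Where you diverge is the evaluation of the double integral. The paper proves a bespoke two-sided, \emph{two-factor} estimate (Lemma \ref{lem:int_est}), $\int(D-Bs)^{-\kappa}(A-Bs)^{-\gamma}\,d\Pi_\nu(s)\simeq(D-B)^{-\kappa}A^{-\nu-1/2}(A-B)^{-\gamma+\nu+1/2}$, and applies it twice -- first in $v$ with $\kappa=0$, then in $u$ with $\kappa=\beta+1/2$; the two-factor form is forced precisely because the first integration leaves a product of two $u$-dependent factors. The theorem then drops out of the identities $1-\cos\frac{\theta}2\cos\frac{\varphi}2=\sin^2\frac{\theta-\varphi}4+\sin^2\frac{\theta+\varphi}4$ and its counterpart at the other endpoint, which are exactly your three comparisons for $D$, $D+p$, $D+q$. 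You instead decouple $u$ and $v$ by the Gamma-integral subordination and recycle Lemma \ref{lem:bes}, at the cost of one Laplace-type integral in $\sigma$; that integral does satisfy your claimed two-sided bound for $\alpha,\beta\ge-1/2$. Indeed, the lower bound is easier than you fear: restrict to $\sigma\in[1/D,2/D]$ and use that $(1+\sigma p)^{-\alpha-1/2}$ and $(1+\sigma q)^{-\beta-1/2}$ are nonincreasing. The upper bound follows by splitting at $\sigma=1/D$ and, below that, at $1/p$ and $1/q$; the bookkeeping closes uniformly because the net exponent $(\alpha+\beta+1)-(\alpha+1/2)-(\beta+1/2)$ vanishes, and this is precisely the one-factor analogue of Lemma \ref{lem:int_est} that you quote at the end. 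Two small points to patch: Lemma \ref{lem:bes} is stated for $\nu>-1/2$, so at $\alpha=-1/2$ or $\beta=-1/2$ you should note $\int e^{zs}\,d\Pi_{-1/2}(s)=\cosh z\simeq(1+z)^{0}e^z$, which keeps your formulas valid; and the even-part/symmetry argument in your derivation of the representation should be spelled out as in the proof of Theorem \ref{thm:prod} (the odd part integrates to zero because $\Pi_\alpha\times\Pi_\beta$ is invariant under $(u,v)\mapsto(-u,-v)$). On balance, your scheme is arguably more systematic -- the same subordination-plus-Lemma-\ref{lem:bes} mechanism would also reproduce the heat-kernel computation of Step 3 -- whereas the paper's direct double application of Lemma \ref{lem:int_est} is shorter and yields both time regimes from a single uniform estimate (for $t\ge T$ every factor in it is $\simeq e^{t/2}$, giving $e^{-t(\alpha+\beta+1)/2}$ at once), while you treat large time separately via the constant bound on $\mathfrak{q}_t$, which is also perfectly fine.
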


The representation formula we shall use is \cite[Proposition 4.1]{NoSj2}, which says that for
$\ab \ge -1/2$
\begin{equation} \label{PJker}
\mathcal{H}_t^{\ab}(\theta,\varphi) = c_{\alpha,\beta} \,\sinh\frac{t}2
	\iint \frac{d\Pi_{\alpha}(u)d\Pi_{\beta}(v)}
	{(\cosh\frac{t}2-1 + q(\theta,\varphi,u,v))^{\alpha+\beta+2}},
\end{equation}
with $c_{\alpha,\beta} =2^{-\alpha-\beta-1}\slash \m_{\ab}(0,\pi)$ and 
$$
q(\theta,\varphi,u,v) = 1 - u \sin\frac{\theta}2 \sin\frac{\varphi}2
	- v \cos\frac{\theta}2 \cos\frac{\varphi}2, \qquad \theta,\varphi \in [0,\pi], \quad u,v \in [-1,1].
$$
This is based on the same product formula due to Dijksma and Koornwinder that we used in Section 
\ref{sec:prep}. The behavior of the double integral in \eqref{PJker} can be described by means
of the following technical result.
\begin{lem} \label{lem:int_est}
Let $\kappa \ge 0$ and $\gamma$ and $\nu$ be such that $\gamma > \nu +1/ 2 \ge 0$.
Then
$$
\int_{[-1,1]} \frac{d\Pi_{\nu}(s)}{(D-Bs)^{\kappa}(A-Bs)^{\gamma}} \simeq
	\frac{1}{(D-B)^{\kappa} A^{\nu+1/2} (A-B)^{\gamma-\nu-1/2}}, \qquad 0< B < A < D.
$$
\end{lem}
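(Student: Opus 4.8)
The plan is to estimate the integral directly, absorbing the positive normalizing constant of $\Pi_\nu$ into the relation $\simeq$. I would first dispose of the degenerate case $\nu=-1/2$, where $\Pi_{-1/2}=\frac12(\delta_{-1}+\delta_1)$ and the left-hand side is the two-term sum $\frac12\big[(D-B)^{-\kappa}(A-B)^{-\gamma}+(D+B)^{-\kappa}(A+B)^{-\gamma}\big]$; since $\kappa\ge 0$, $\gamma>0$ and $A-B<A+B$, $D-B<D+B$, the first term dominates and, as $\nu+1/2=0$ here, equals the claimed expression. From now on I assume $\nu>-1/2$, so that $d\Pi_\nu(s)\simeq(1-s^2)^{\nu-1/2}\,ds$ and $\nu+1/2>0$. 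The whole estimate is of the same flavor as Lemma \ref{lem:bes}: the integrand concentrates near $s=1$, where $A-Bs$ is smallest.

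For the upper bound, note that $D-Bs\ge D-B$ for every $s\in[-1,1]$, so with $\kappa\ge 0$ I may bound $(D-Bs)^{-\kappa}\le (D-B)^{-\kappa}$ and pull this factor out. It then remains to show that $\tilde I:=\int_{-1}^1(1-s^2)^{\nu-1/2}(A-Bs)^{-\gamma}\,ds\simeq A^{-\nu-1/2}(A-B)^{\nu+1/2-\gamma}$. Substituting $r=1-s$ and writing $A-Bs=(A-B)+Br$, the contribution of $r\in[1,2]$ is at most a constant times $A^{-\gamma}$, which is dominated by the target since $\gamma-\nu-1/2>0$ and $A-B<A$. On $r\in[0,1]$ one has $(1-s^2)^{\nu-1/2}\simeq r^{\nu-1/2}$, and the substitution $Br=(A-B)w$ gives
\[
\int_0^1 r^{\nu-1/2}\big((A-B)+Br\big)^{-\gamma}\,dr=(A-B)^{\nu+1/2-\gamma}B^{-\nu-1/2}\int_0^{B/(A-B)} w^{\nu-1/2}(1+w)^{-\gamma}\,dw .
\]
Because $\gamma>\nu+1/2$, the $w$-integral over $[0,\infty)$ converges. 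Distinguishing $A\le 2B$ from $A>2B$, the truncated $w$-integral is $\simeq 1$ in the first case and $\simeq (B/(A-B))^{\nu+1/2}$ in the second; using $A\simeq B$ in the former and $A-B\simeq A$ in the latter, both cases reproduce the target order $A^{-\nu-1/2}(A-B)^{\nu+1/2-\gamma}$.

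For the matching lower bound I would integrate only over the interval $R=\{s\in[-1,1]:B(1-s)\le A-B\}$ abutting $s=1$. On $R$ one has $A-B\le A-Bs\le 2(A-B)$, and moreover $D-Bs=(D-B)+B(1-s)\le (D-B)+(A-B)\le 2(D-B)$, where the decisive inequality $A-B<D-B$ is exactly the hypothesis $A<D$. Thus $A-Bs\simeq A-B$ and $D-Bs\simeq D-B$ throughout $R$, whence the integral over $R$ is $\gtrsim (D-B)^{-\kappa}(A-B)^{-\gamma}\int_R(1-s^2)^{\nu-1/2}\,ds$. Evaluating the weight gives $\int_R(1-s^2)^{\nu-1/2}\,ds\simeq\big(\min(1,(A-B)/B)\big)^{\nu+1/2}$, and the same split into $A\simeq B$ and $A-B\simeq A$ reproduces the claimed lower order.

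The step I expect to be the main obstacle is tracking the \emph{finite} range of integration. Naively extending the $w$-integral to $[0,\infty)$ would produce a factor $B^{-\nu-1/2}$ instead of $A^{-\nu-1/2}$, which is incorrect precisely when $A\gg B$; it is the truncation at $w=B/(A-B)$ that corrects this and forces the two-regime analysis in both the upper and lower bounds. Once this is handled, the remaining verifications — comparability of $(A-Bs)$ and $(D-Bs)$ on the effective support, and the crude domination of the tail near $s=-1$ — are routine.
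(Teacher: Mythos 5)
Your proof is correct and follows essentially the same route as the paper: both arguments reduce to the behavior near $s=1$, rescale the variable there, and split into the two regimes $A\le 2B$ and $A>2B$ (the paper first normalizes $B=1$ by homogeneity, so its split at $u=A-1$ and cases $A\gtrless 2$ are exactly your truncation at $w=B/(A-B)$). Your separate lower-bound argument on the region $\{B(1-s)\le A-B\}$ and your explicit treatment of $\nu=-1/2$ are only cosmetic variations on the paper's proof.
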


\begin{proof}
We may assume that $B=1$, since one can factor out a power of $B$ from both sides of the formula.
The case $\nu=-1/2$ is trivial, so it is enough to consider $\nu+1/2 > 0$. Observe that
$$
\int_{-1}^1 \frac{(1-s^2)^{\nu-1/2}\, ds}{(D-s)^{\kappa}(A-s)^{\gamma}} \simeq
	\int_0^1 \frac{(1-s)^{\nu-1/2}\, ds}{(D-s)^{\kappa}(A-s)^{\gamma}} =
		\int_0^1 \frac{u^{\nu-1/2} \, du}{(D-1 + u)^{\kappa}(A-1+u)^{\gamma}}.
$$
Thus it suffices to analyze the last integral, which we denote by $\mathcal{I}$.
Now $A>1$, and we consider the following two cases. 
\newline {\bf Case 1:} $\mathbf{A \ge 2.}$ Since in this case $D-1 + u \simeq D-1$ and
$A-1 + u \simeq A-1 \simeq A$ for $u \in (0,1)$, the conclusion is immediate.
\newline {\bf Case 2:} $\mathbf{1 < A < 2.}$
We split $\mathcal{I}$ as
$$
\mathcal{I} = \Bigg\{ \int_0^{A-1} + \int_{A-1}^1 \Bigg\} \;
	\frac{u^{\nu-1/2}\, du}{(D-1+u)^{\kappa}(A-1+u)^{\gamma}} \equiv \mathcal{I}_1 + \mathcal{I}_2.
$$
Then
$$
\mathcal{I}_1 \simeq \frac{1}{(D-1)^{\kappa}(A-1)^{\gamma}} \int_0^{A-1} u^{\nu-1/2}\, du
	\simeq \frac{1}{(D-1)^{\kappa}(A-1)^{\gamma-\nu-1/2}}
$$
and
$$
\mathcal{I}_2 \lesssim \frac{1}{(D-1)^{\kappa}} \int_{A-1}^1 u^{\nu-1/2-\gamma}\, du
	\lesssim \frac{1}{(D-1)^{\kappa}(A-1)^{\gamma-\nu-1/2}}.
$$
Since $A\simeq 1$, this implies the conclusion.
\end{proof}

\begin{proof}[Proof of Theorem \ref{thm:Poisson}]
We use \eqref{PJker} and apply Lemma \ref{lem:int_est} twice, first to the integral against
$d\Pi_{\beta}(v)$, with the parameters $\nu=\beta$, $\kappa = 0$, $\gamma = \alpha+\beta+2$,
$A=\cosh\frac{t}2-u\sin\frac{\theta}2 \sin\frac{\varphi}2$, $B=\cos\frac{\theta}2 \cos\frac{\varphi}2$,
and then to the resulting integral against $d\Pi_{\alpha}(u)$, with the parameters
$\nu=\alpha$, $\kappa = \beta+1/2$, $\gamma=\alpha+3/2$, $D=\cosh\frac{t}2$,
$A=\cosh\frac{t}2-\cos\frac{\theta}2 \cos\frac{\varphi}2$ and $B=\sin\frac{\theta}2 \sin\frac{\varphi}2$.
This leads to the estimates
\begin{align*}
& \mathcal{H}_t^{\ab}(\theta,\varphi)\\
& \simeq
	\frac{\sinh\frac{t}2}{\left(\cosh\frac{t}2-\cos\frac{\theta}2 \cos\frac{\varphi}2\right)^{\alpha+1/2}
	\left(\cosh\frac{t}2-\sin\frac{\theta}2 \sin\frac{\varphi}2\right)^{\beta+1/2}
	\left(\cosh\frac{t}2-\sin\frac{\theta}2 \sin\frac{\varphi}2-\cos\frac{\theta}2
	 \cos\frac{\varphi}2\right)} \\
& = \frac{1}{\big(\cosh\frac{t}2 -1 + 
	\sin^2\frac{\theta-\varphi}4+\sin^2\frac{\theta+\varphi}4\big)^{\alpha+1/2}} \\
& \qquad \times \frac{1}{\big(\cosh\frac{t}2 -1 + 
	\sin^2\frac{(\pi-\theta)-(\pi-\varphi)}4+\sin^2\frac{(\pi-\theta)+(\pi-\varphi)}4\big)^{\beta+1/2}}
 \;	\frac{\sinh\frac{t}2}{\cosh\frac{t}2-1 + 2\sin^2\frac{\theta-\varphi}4}.	
\end{align*}
As easily verified, the conclusion now follows.
\end{proof}

\vspace{5pt}


\end{document}